
\documentclass[draft]{amsart}

\usepackage{amsthm}
\usepackage{amssymb} 
\usepackage{enumerate} 

\usepackage{hyperref}


\numberwithin{equation}{section}

\theoremstyle{plain}
\newtheorem{lemma}{Lemma}[section]
\newtheorem{theorem}[lemma]{Theorem}
\newtheorem{proposition}[lemma]{Proposition}
\newtheorem{corollary}[lemma]{Corollary}
\newtheorem{question}[lemma]{Question}

\theoremstyle{definition} 
\newtheorem{definition}[lemma]{Definition}
\newtheorem{example}[lemma]{Example}

\theoremstyle{remark}
\newtheorem{remark}[lemma]{Remark} 


\hyphenation{mo-dule mo-dul-es com-plex com-plex-es mor-phism
  ho-mo-mor-phism iso-mor-phism pro-jec-tive in-jec-tive re-so-lu-tion
  ho-mo-lo-gy ho-mo-lo-gi-cal ho-mo-lo-gi-cally du-a-liz-ing
  re-si-due Grothen-dieck com-mu-ta-tive}

\renewcommand{\dim}{\operatorname{dim}}
\newcommand{\depth}{\operatorname{depth}}
\newcommand{\Supp}{\operatorname{Supp}}

\newcommand{\Ext}{\operatorname{Ext}}
\newcommand{\Min}{\operatorname{Min}}
\newcommand{\Ass}{\operatorname{Ass}}
\newcommand{\hh}{\operatorname{H}}

\newcommand{\ee}{{}^{e}\!}
\newcommand{\ph}{{}^{\phi}\!}
\newcommand{\Hom}{\operatorname{Hom}}
\newcommand{\susp}{\mathsf{\Sigma}}
\newcommand{\id}{\operatorname{id}}

\newcommand{\fd}{\operatorname{fd}}
\renewcommand{\le}{\leqslant}
\renewcommand{\ge}{\geqslant}

\newcommand{\kos}[2]{\operatorname{K}[#1;#2]}

\newcommand{\lol}{\ell\ell}
\newcommand{\Spec}{\operatorname{Spec}}
\newcommand{\Tor}{\operatorname{Tor}}
\newcommand{\pd}{\operatorname{pd}}

\newcommand{\bsy}{\boldsymbol{y}}

\newcommand{\vf}{\varphi}

\newcommand{\RHom}{\operatorname{\mathsf{R}Hom}}

\newcommand{\lotimes}{\otimes^{\mathbf L}}


\newcommand{\sfD}{\mathsf D}

\newcommand{\bsx}{\boldsymbol{x}}

\newcommand{\fm}{\mathfrak{m}} 
\newcommand{\fp}{\mathfrak{p}}
\newcommand{\fn}{\mathfrak{n}}

\begin{document}

\title[Characterizations of Gorenstein rings]{Characterizing Gorenstein rings using contracting endomorphisms}

\dedicatory{Dedicated to Craig Huneke on the occasion of his 65th birthday.}
\author[B. Falahola]{Brittney Falahola}

\address{Stephen F. Austin State University, Nacogdoches, TX 75962, U.S.A.}

\email{falaholabl@sfasu.edu}

\urladdr{http://www.sfasu.edu/math}

\author[T.\ Marley]{Thomas Marley}

\address{University of Nebraska-Lincoln, Lincoln, NE 68588, U.S.A.}
\email{tmarley1@unl.edu}

\urladdr{http://www.math.unl.edu/~tmarley1}

\date{\today}

\bibliographystyle{amsplain}

\keywords{contracting endomorphism, Frobenius map, Gorenstein ring}

\subjclass[2010]{13D05; 13D07, 13A35}

\begin{abstract} 
We prove several characterizations of Gorenstein rings in terms of vanishings of derived functors of certain modules or complexes whose scalars are restricted via contracting endomorphisms.  These results can be viewed as analogues of results of Kunz (in the case of the Frobenius) and Avramov-Hochster-Iyengar-Yao (in the case of general contracting endomorphisms).
 \end{abstract}

\maketitle

\section{Introduction}  

 In 1969 Kunz \cite{Ku} proved that a commutative Noetherian local ring of prime characteristic is regular if and only if some (equivalently, every) power of the  Frobenius endomorphism is flat.    Subsequently,  the Frobenius map has been employed to great effect to study homological properties of commutative Noetherian local rings;  see \cite{PS}, \cite{He}, \cite{R}, \cite{KL}, \cite{AM} and \cite{AHIY}, for example.  In this paper, we explore characterizations of Gorenstein rings using the Frobenius map, or more generally, contracting endomorphisms.   Results of this type have been obtained by Iyengar and Sather-Wagstaff \cite{ISW}, Goto \cite{G}, Rahmati \cite{Ra}, Marley \cite{M}, and others.  Our primary goal is to obtain characterizations for a local ring to be Gorenstein in terms of one or more vanishings of derived functors in which one of the modules is viewed as an $R$-module by means of a contracting endomorphism (i.e., via restriction of scalars).  A prototype of a result of this kind for regularity is given by  Theorem \ref{ahiy} below.
 
Let $R$ be a commutative Noetherian local ring with maximal ideal $\fm$ and $\phi:R\to R$ an endomorphism which is contracting, i.e., $\phi^i(\fm)\subseteq \fm^2$ for some $i$.  For an $R$-module $M$, let $\ph M$ denote the abelian group $M$ viewed as an $R$-module via $\phi$; i.e., $r\cdot m:=\phi(r)m$ for $r\in R$ and $m\in \ph M$.    A far-reaching generalization of Kunz's result due to Avramov, Hochster, Iyengar and Yao \cite{AHIY} states that if there exists a nonzero finitely generated $R$-module $M$ such that $\ph M$ has finite flat dimension or finite injective dimension, then $R$ is regular.  We can rephrase this result in terms of vanishing of derived functors as follows: 

\begin{theorem} (\cite[Theorem 1.1]{AHIY})
\label{ahiy}
Let $(R,\fm, k)$ be a $d$-dimensional Noetherian local ring with maximal ideal $\fm$ and residue field $k$. Let $\phi:R\to R$ be a contracting endomorphism and $M$ a finitely generated nonzero $R$-module.
The following  are equivalent:
\begin{enumerate}[(a)]
\item $R$ is regular.
\item $\Tor_i^R(\ph M, k)=0$ for some (equivalently, every) $i>d$.
\item $\Ext_R^i(\ph M,k)=0$ for some (equivalently, every) $i>d$.
\item $\Ext^i_R(k, \ph M)=0$ for some (equivalently, every) $i>d$.
\end{enumerate}
\end{theorem}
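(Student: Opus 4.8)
The plan is to deduce the theorem from the form of the Avramov--Hochster--Iyengar--Yao result stated in the preceding paragraph, together with two standard facts about a Noetherian local ring: the projective (hence flat) dimension of an \emph{arbitrary} module is at most the global dimension, and the homological dimensions relevant here are detected by vanishing of (co)homology against the residue field. I would begin with the routine implications. Since $\ph M$ has the same underlying abelian group as $M$, it is nonzero. If $R$ is regular then its global dimension is $d$, so $\pd_R\ph M\le d$ and $\id_R\ph M\le d$ --- no finiteness hypothesis on $\ph M$ is needed here --- whence $\Tor^R_i(\ph M,k)=0$, $\Ext^i_R(\ph M,k)=0$ and $\Ext^i_R(k,\ph M)=0$ for every $i>d$; in each of (b), (c), (d) the ``every'' assertion trivially gives the ``some'' one. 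Thus (a) implies the rest, and it remains to show that the ``some $i>d$'' form of each of (b), (c), (d) implies (a).

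The uniform strategy for that direction is: from a single vanishing at a level $i>d$, conclude that $\ph M$ has finite flat dimension (in cases (b) and (c)) or finite injective dimension (in case (d)), and then apply \cite[Theorem~1.1]{AHIY}, noting that $M$ is a nonzero finitely generated $R$-module. The passage from one vanishing to a finite homological dimension rests on a rigidity property of restriction of scalars along a \emph{contracting} endomorphism: if $\Tor^R_i(\ph M,k)$ (resp.\ $\Ext^i_R(\ph M,k)$, resp.\ $\Ext^i_R(k,\ph M)$) vanishes for one $i$ with $i>d\ge\depth R$, then the corresponding (co)homology vanishes in all larger degrees, and the standard characterizations of $\fd_R\ph M$ via $\Tor^R_\bullet(k,\ph M)$ and of $\id_R\ph M$ via $\Ext^\bullet_R(k,\ph M)$ then force the dimension to be finite. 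When $\ph M$ is finitely generated over $R$ --- as happens whenever $\phi$ is module-finite --- this rigidity is classical: vanishing of $\Tor^R_i(\ph M,k)$, equivalently of the $i$-th Betti number, truncates the minimal free resolution, so $\pd_R\ph M\le i-1<\infty$ and Auslander--Buchsbaum gives $\pd_R\ph M\le\depth R\le d$; case (c) reduces to case (b) because $\Ext^i_R(\ph M,k)$ and $\Tor^R_i(\ph M,k)$ vanish together over the minimal free resolution, and case (d) is the dual argument with the minimal injective resolution and the Bass equality.

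I expect the main obstacle to be the rigidity step when $\ph M$ is \emph{not} finitely generated over $R$ --- precisely the case in which $\phi$ is not module-finite. Here one cannot simply pass to the completion, since a contracting endomorphism of a complete local ring need not be finite, so minimal free and injective resolutions are no longer available in the usual form; one must instead invoke the homological behavior of modules obtained by restriction along a contracting endomorphism that is established in \cite{AHIY}, rather than a naive resolution argument. Once this step is in place, each of (b), (c), (d) yields that the nonzero finitely generated module $M$ satisfies ``$\ph M$ has finite flat or injective dimension,'' and \cite[Theorem~1.1]{AHIY} delivers that $R$ is regular, closing the cycle of implications.
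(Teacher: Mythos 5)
Your overall strategy coincides with the paper's: prove the easy implication from regularity, and for the converses upgrade a single vanishing in degree $i>d$ to finiteness of $\fd_R\,\ph M$ or $\id_R\,\ph M$ and then quote \cite[Theorem 1.1]{AHIY}. The easy direction and the finitely generated case are fine. But the heart of the matter --- the rigidity step when $\ph M$ is \emph{not} finitely generated over $R$, which is the typical situation since $\phi$ need not be module-finite --- is exactly where your argument stops: you acknowledge the obstacle and propose to ``invoke the homological behavior of modules obtained by restriction along a contracting endomorphism that is established in \cite{AHIY}.'' That is a genuine gap, and also a misdiagnosis: \cite[Theorem 1.1]{AHIY} takes finite flat or injective dimension as its \emph{hypothesis} and contains no such rigidity statement, and the rigidity that is actually needed has nothing to do with $\phi$ being contracting. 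It is a property of Tor and Ext against the residue field for \emph{arbitrary} modules over a Noetherian local ring.

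Concretely, the paper closes the gap as follows. For (b): Andr\'e's lemma \cite[Lemme 2.57]{An} says that for any $R$-module $N$, vanishing of $\Tor_i^R(N,k)$ at a single $i$ forces vanishing in all higher degrees, hence $\fd_R\,\ph M<\infty$ by the characterization of flat dimension via $\Tor(-,k)$ (no minimal resolution or Auslander--Buchsbaum argument is needed, and no finiteness of $\ph M$). For (c): by the proof of \cite[Proposition 5.5P]{AF1991} one has the duality between $\Ext^i_R(\ph M,k)$ and $\Tor_i^R(\ph M,k)$ for arbitrary modules, reducing (c) to (b) without passing through a minimal free resolution. For (d): the rigidity of $\Ext^i_R(k,-)$ in degrees $i\ge\dim R$ is \cite[Proposition 3.2]{CIM} (again valid for arbitrary modules, and this is where the hypothesis $i>d$ is genuinely used), and then \cite[Proposition 5.5I]{AF1991} gives $\id_R\,\ph M<\infty$; note that your sketch for (d) via ``truncating the minimal injective resolution and the Bass equality'' is shaky even in the finitely generated case, since vanishing of a single Bass number does not formally truncate a minimal injective resolution without such a theorem. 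With these three inputs in place of your appeal to \cite{AHIY}, your reduction scheme becomes the paper's proof.
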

\begin{proof}Clearly, (a) implies both (b), (c), and (d) as regular local rings have finite global dimension.  If (b) holds, then by a result of M. Andre \cite[Lemme 2.57]{An} $\ph M$ has finite flat dimension and $R$ is regular by \cite[Theorem 1.1]{AHIY}.   If (c) holds, then by the proof of \cite[Proposition 5.5P]{AF1991} we see that $\Tor_i^R(\ph M, k)=0$ for some (equivalently, every) $i>\dim R$ and $R$ is regular by part (a).  Finally, suppose (d) holds.   Then by \cite[Proposition 3.2]{CIM}, $\Ext^j_R(k,\ph M)=0$ for all $j\ge i$.  This implies by \cite[Proposition 5.5I]{AF1991} that $\ph M$ has finite injective dimension, and $R$ is regular by \cite[Theorem 1.1]{AHIY}.
\end{proof}

One of our aims is to find results similar to this for Gorenstein rings.  We are unable to find such a result which applies to all contracting endomorphisms, but only for those in which the image of the maximal ideal lies in a sufficiently high power of itself.   One power which suffices is given by a constant first introduced in \cite{AHIY} and later modified in \cite{DIM}.   This constant, which we denote by $c(R)$, is defined in terms of derived Loewy lengths of Koszul complexes of systems of parameters of $R$ (see Definition \ref{homotopical}).    

Below is one of the characterizations we are able to prove (Corollary \ref{cor-main}); additional ones are given in Section 3.

\begin{theorem} \label{intro-theorem}
Let $(R, \fm, k)$ be a $d$-dimensional Noetherian local ring and $\phi:R\to R$ a contracting endomorphism such that $\phi(\fm)\subseteq \fm^{c(R)}$.   Let $M$ be an $R$-module such that $M\neq \fm M$ and $E$ the injective envelope of $k$.
The following conditions are equivalent:
\begin{enumerate}[(a)]
\item $R$ is Gorenstein.
\item $\Tor_i^R(\ph M, E)=0$ for $d+1$ consecutive (equivalently, every) $i>d$
\item $\Ext^i_R(\ph M, R)=0$ for $d+1$ consecutive (equivalently, every) $i>d$.
\item $\Ext^i_R(E, \ph M)=0$ for $d+1$ consecutive (equivalently, every) $i>d$.
\end{enumerate}
\end{theorem}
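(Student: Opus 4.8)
The plan is to prove the four conditions equivalent by a single cycle of implications, arranging things so that the implications \emph{into} (a) require only the weakest form of the hypotheses (vanishing for $d+1$ consecutive $i>d$) while the implications \emph{out of} (a) produce the strongest conclusion (vanishing for every $i>d$); this disposes of all the ``some versus every'' parentheticals at once. For $(a)\Rightarrow(b),(c),(d)$ I would use the standard homological characterizations of Gorenstein rings. If $R$ is Gorenstein of dimension $d$ then $\id_R R=d$, so $\Ext_R^i(-,R)=0$ for all $i>d$, which is (c). Moreover $\hh^j_{\fm}(R)=0$ for $j\neq d$ and $\hh^d_{\fm}(R)\cong E$, so the \v{C}ech complex on a system of parameters is a flat resolution of $E$ of length $d$; hence $\fd_R E\le d$ and $\Tor^R_i(-,E)=0$ for $i>d$, which is (b). The same identification of $E$ shows, via local duality, that $\RHom_R(E,-)$ is $\susp^{-d}$ of the derived $\fm$-adic completion functor, whose homology lies in $d+1$ consecutive cohomological degrees; thus $\Ext_R^i(E,-)=0$ for $i>d$, giving (d). (Alternatively one can route all three through the Section~2 results applied in this special setting.)

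The substance is the three reverse implications, and my strategy mirrors the proof of Theorem~\ref{ahiy}, with ``regular'' replaced by ``Gorenstein'' and finite flat (resp.\ injective) dimension replaced by the appropriate \emph{Gorenstein}-type homological dimension. First I would translate each hypothesis into a finiteness statement for $\ph M$: the $\Ext$-vanishing in (c) says $\RHom_R(\ph M,R)$ is homologically bounded, which (using that $R$ is a test object) should yield that $\ph M$ has finite Gorenstein dimension; the $\Tor$-vanishing in (b) says $\ph M\lotimes_R E$ is homologically bounded, and since $E$ is an injective cogenerator this should yield that $\ph M$ has finite Gorenstein flat dimension; and the $\Ext$-vanishing in (d), after passing to the completion and invoking local duality to rewrite $\RHom_R(E,\ph M)$ in terms of $\fm$-torsion, should yield that $\ph M$ has finite Gorenstein injective dimension. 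The role of ``$d+1$ consecutive'' values is precisely that, upon filtering by the Koszul complex $\kos{\bsx}{R}$ on a system of parameters $\bsx=x_1,\dots,x_d$ (a perfect complex of amplitude $d$), $d+1$ consecutive pointwise vanishings collapse into vanishing statements in exactly the format demanded by the Section~2 estimates, which are phrased through the constant $c(R)$ and the derived Loewy lengths of such Koszul complexes. With the relevant Gorenstein dimension of $\ph M$ known to be finite, one then invokes the Gorenstein analogue of the Avramov--Hochster--Iyengar--Yao theorem: if $\phi$ is contracting with $\phi(\fm)\subseteq\fm^{c(R)}$, $M\neq\fm M$, and $\ph M$ has finite Gorenstein (flat, injective, or Auslander) dimension, then $R$ is Gorenstein. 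Feeding the three translations into this closes the cycle, and the implications out of (a) then upgrade ``$d+1$ consecutive'' to ``every $i>d$'' in each case.

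I expect the main obstacle to be exactly that last ingredient, the implication ``$\ph M$ has finite Gorenstein homological dimension $\Rightarrow R$ is Gorenstein.'' This is the genuinely new part; unlike the regular case it admits no shortcut through Theorem~\ref{ahiy}, and it is where the hypothesis $\phi(\fm)\subseteq\fm^{c(R)}$ is indispensable: for a non-contracting $\phi$ such as $\phi=\id$ the statement is false, since $\ph M=M=R$ has Gorenstein dimension $0$ over any ring, so quantitative control on how deeply $\phi$ contracts $\fm$ is essential, and $c(R)$ supplies just enough of it for the derived Loewy-length bookkeeping on Koszul complexes of systems of parameters to work. A secondary point to be careful about is that $M$ is \emph{not} assumed finitely generated (only $M\neq\fm M$, equivalently $\ph M/\fm\,\ph M\neq 0$), so the translations in the previous paragraph must avoid any appeal to finiteness of $\ph M$ over $R$; here one leans on $E$ and $R$ being cogenerating/test objects and on the foothold provided by $\ph M/\fm\,\ph M\neq 0$, rather than on structural properties of $\ph M$ itself.
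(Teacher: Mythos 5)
There is a genuine gap at the crux of your plan for the reverse implications. You reduce everything to the assertion that if $\phi(\fm)\subseteq\fm^{c(R)}$, $M\neq\fm M$, and $\ph M$ has finite Gorenstein (flat, injective, or Auslander) dimension, then $R$ is Gorenstein --- and you yourself flag this as the ``genuinely new part'' that you expect to be the main obstacle. That step is nowhere proved in your proposal, it is not available in the literature at this level of generality (in particular $M$ is not assumed finitely generated, so even the classical G-dimension of $\ph M$ is not defined and its Gorenstein flat/injective analogues are delicate), and it is at least as hard as the theorem itself; so the argument is incomplete exactly where the content lies. Moreover the translations feeding into it are not sound: vanishing of $\Ext^i_R(\ph M,R)$ for finitely many (or even all large) $i$ gives at best homological boundedness of $\RHom_R(\ph M,R)$, which is far from finite G-dimension (one also needs biduality and finiteness conditions), and homological boundedness of $\ph M\lotimes_R E$ does not yield finite Gorenstein flat dimension of $\ph M$.

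The paper avoids all of this by running the argument in the opposite direction: the hypotheses are never used to bound any homological dimension of $\ph M$, but rather of the test object $L$ ($=E$ in (b), (d); $=R$ in (c)). Concretely, the $d+1$ consecutive vanishings, fed through the Koszul complex $\kos{\bsy}{\ph M}$ on an s.o.p.\ realizing $c(R)$ and the isomorphisms of Proposition~\ref{pr:tor-iso} (this is precisely where $\phi(\fm)\subseteq\fm^{c(R)}$ is used, making $\ph M$ behave like $k$), produce a single vanishing of $\Tor^R_*(k,L)$ or $\Ext_R^*(k,L)$ (Lemma~\ref{le:prop1}); rigidity over $k$ (Proposition~\ref{le:prop2}) upgrades this to all higher degrees, and the Avramov--Foxby criteria (Proposition~\ref{le:prop3}) then give $\fd_R L<\infty$ (resp.\ $\id_R L<\infty$). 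Since $E$ is injective and $\id_R R$ is finite exactly when sought --- more precisely, since $L$ now has both finite flat and finite injective dimension and $k\lotimes_R L\not\simeq 0$ --- Foxby's characterization (Proposition~\ref{le:prop4}) yields that $R$ is Gorenstein; this is Theorem~\ref{main-theorem1} specialized as in Corollary~\ref{cor-main}. Your forward direction (a)$\Rightarrow$(b),(c),(d) is essentially fine (the paper uses $\pd_R E_R=\fd_R E_R=\id_R R=d$ via Raynaud--Gruson rather than Greenlees--May duality for (d)), but to repair the proposal you should replace the ``Gorenstein dimension of $\ph M$'' strategy by this role-reversal: conclude finiteness of the flat/injective dimension of $E$ or $R$ and invoke Proposition~\ref{le:prop4}.
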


Our methods require $d+1$ consecutive vanishings of Tor (respectively, Ext), rather than a single vanishing as in Theorem \ref{ahiy}, although we know of no examples where a single vanishing in degree greater than the dimension does not suffice to imply $R$ is Gorenstein.

Finally, we prove another characterization (Proposition \ref{le:prop7}) for Gorenstein rings which uses contracting endomorphisms in a different way:

\begin{proposition}
Let $(R,\fm,k)$ be a Cohen-Macaulay local ring possessing a canonical module $\omega_R$.  Let $\phi:R\to R$ be a contracting homomorphism and let $S$ denote the ring $R$ viewed as an $R$-algebra via $\phi$.  The following are equivalent:
\begin{enumerate}[(a)]
\item $R$ is Gorenstein.
\item $S\otimes_R \omega_R$ has finite injective dimension as an $S$-module.
\end{enumerate}
\end{proposition}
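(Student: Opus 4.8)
The implication $(a)\Rightarrow(b)$ is immediate: if $R$ is Gorenstein then $\omega_R\cong R$, so $S\otimes_R\omega_R\cong S$, and since $S$ coincides with $R$ as a ring it is itself Gorenstein, hence of finite injective dimension over itself. For $(b)\Rightarrow(a)$ the point to keep in mind throughout is that $S$ \emph{is} the ring $R$; only its $R$-algebra structure has been twisted by $\phi$. Thus $S$ is a Cohen--Macaulay local ring with maximal ideal $\fm$, canonical module $\omega_S=\omega_R$, and $\dim S=\dim R=d$, and ``finite injective dimension over $S$'' means the same as ``finite injective dimension over $R$'' for the module in question. After passing to the completion, the $S$-module $N:=S\otimes_R\omega_R$ is nonzero (Nakayama: $N\otimes_Sk\cong\omega_R\otimes_Rk\neq0$) and finitely generated, so the hypothesis $\id_S N<\infty$ and Bass's formula give $\id_S N=\depth S=d$.

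The strategy is first to show $S\otimes_R\omega_R\cong\omega_S$ and then that this forces $\omega_R$ to be free. For the first step I would prove that $N=S\otimes_R\omega_R$ is a \emph{maximal} Cohen--Macaulay $S$-module; granting that, $N^{\dagger}:=\Hom_S(N,\omega_S)$ is again maximal Cohen--Macaulay with $N^{\dagger\dagger}\cong N$ and $\RHom_S(N,\omega_S)\simeq N^{\dagger}$, and the duality over a Cohen--Macaulay ring with a canonical module ($\pd_S X<\infty\Leftrightarrow\id_S\Hom_S(X,\omega_S)<\infty$ for maximal Cohen--Macaulay $X$), applied to $X=N^{\dagger}$ together with $\id_S N<\infty$, yields $\pd_S N^{\dagger}<\infty$; by Auslander--Buchsbaum $N^{\dagger}$ is then free, so $N\cong\omega_S^{\,n}$ for some $n\ge1$. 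Counting minimal generators, $\mu_S(N)=\dim_k(\omega_R\otimes_Rk)=\mu_R(\omega_R)=\mu_S(\omega_S)$ while $\mu_S(\omega_S^{\,n})=n\mu_S(\omega_S)$, which forces $n=1$; hence $S\otimes_R\omega_R\cong\omega_S$. For the second step, the isomorphism $S\otimes_R\omega_R\cong\omega_R$ persists with $\phi$ replaced by any power $\phi^e$ (base change along $\phi^e$ is the $e$-fold iterate of base change along $\phi$, and base change is a functor), so I may assume $\phi(\fm)\subseteq\fm^2$. Applying base change to a minimal free resolution $F_\bullet$ of $\omega_R$ over $R$ gives --- once one knows $\Tor_i^R(S,\omega_R)=0$ for $i>0$ --- a minimal free resolution of $S\otimes_R\omega_R\cong\omega_R$ whose differentials have entries in $\phi(\fm)\subseteq\fm^2$; iterating $e$ times puts the entries in $\fm^{2^e}$, and uniqueness of minimal free resolutions identifies each of these complexes with $F_\bullet$. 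Hence the differentials of $F_\bullet$ have entries in $\bigcap_e\fm^{2^e}=0$, so $\omega_R$ is free and $R$ is Gorenstein.

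The main obstacle is the non-flatness of $\phi$. Because $\phi$ is flat exactly when $R$ is regular, the underived module $S\otimes_R\omega_R$ really does differ from $S\lotimes_R\omega_R$, and the higher Tor modules $\Tor_i^R(S,\omega_R)$ must be controlled --- equivalently, the maximal Cohen--Macaulayness of $S\otimes_R\omega_R$ must be established --- before the resolution argument of the second step applies. Relatedly, since $\phi$ does not carry a system of parameters of $R$ into itself, the usual reduction to an Artinian ring by killing a maximal regular sequence is not available verbatim and has to be replaced by Koszul-complex bookkeeping on (powers of) a system of parameters, in the spirit of \cite{AHIY}; I expect that, rather than the homological algebra of the canonical module, to be the technical heart. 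No hypothesis on how deeply $\phi$ contracts $\fm$ should be needed here, since only the single module $\omega_R$ enters the argument.
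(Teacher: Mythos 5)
Your step ``$(b)\Rightarrow(a)$'' rests on an unproved claim that you yourself flag but never discharge: that $N=S\otimes_R\omega_R$ is a maximal Cohen--Macaulay $S$-module (equivalently, that the higher $\Tor_i^R(S,\omega_R)$ are controlled). Everything downstream --- the canonical duality giving $N\cong\omega_S^{\,n}$, and the minimal-resolution argument, which as you note also needs $\Tor_i^R(S,\omega_R)=0$ for $i>0$ --- hangs on this. This is a genuine gap, not a routine verification: finite injective dimension of a finitely generated module over a Cohen--Macaulay ring does \emph{not} imply maximal Cohen--Macaulayness (e.g.\ $R/(x)$ for a parameter $x$ over a one-dimensional Gorenstein ring has finite injective dimension and depth $0$), and base change of a maximal Cohen--Macaulay module along a contracting endomorphism is not maximal Cohen--Macaulay in general; a priori there is no reason for $F^{\phi}(\omega_R)$ to be so before one already knows $R$ is Gorenstein, which makes the step look circular as stated. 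Your second-step worry about $\Tor$-vanishing, by contrast, is removable: the implication ``$F^{\phi}(M)\cong M\Rightarrow M$ free'' needs only a minimal presentation (Fitting ideals and Krull intersection), not a resolution, and is the well-known Lemma 2.1(a) of Rahmati cited in the paper.

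The paper's proof avoids the maximal Cohen--Macaulay question entirely. By Sharp's structure theorem, a finitely generated module of finite injective dimension over a Cohen--Macaulay local ring with canonical module is a homomorphic image of $\omega_R^{\,n}$, so there is an exact sequence $0\to K\to\omega_R^{\,n}\to F^{\phi}(\omega_R)\to 0$ with $K\subseteq\fm\omega_R^{\,n}$; counting minimal generators ($\mu_R(F^{\phi}(\omega_R))=\mu_R(\omega_R)$, since $F^{\phi}$ preserves minimal presentations) forces $n=1$. If $K\neq0$, one localizes at $\fp\in\Ass_R K\subseteq\Ass_R\omega_R=\Min R$: there $\dim R_{\fp}=0$, where finite injective dimension \emph{does} force $F^{\phi}(\omega_R)_{\fp}\cong\omega_{R_{\fp}}^{\,\ell}$, and being a quotient of $\omega_{R_{\fp}}$ gives $\ell=1$ and $K_{\fp}=0$, a contradiction. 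Hence $F^{\phi}(\omega_R)\cong\omega_R$, and freeness of $\omega_R$ follows from the lemma above. So the global statement $F^{\phi}(\omega_R)\cong\omega_S^{\,n}$ that you try to reach via duality is obtained only after localizing to dimension zero, which is exactly the device that replaces the missing maximal Cohen--Macaulay input in your outline. You are right, though, that no hypothesis on how deeply $\phi$ contracts $\fm$ is needed.
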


We prove most of our results in the context of complexes where the arguments are more transparent as well as being more general.  For notation and terminology regarding complexes and the derived category, we refer the reader to \cite{AF1991} or \cite{FI}. 

\section{Preliminaries}

In this section we summarize results on homotopical Loewy length, flat dimension, and injective dimension which will be needed in Section 3.    Throughout  this section $(R,\fm,k)$ denotes a local (which also means commutative and Noetherian) ring of arbitrary characteristic, with maximal ideal $\fm$ and residue field $k$.   We let $\sfD(R)$ denote the derived category of $R$-modules,  and `$\simeq$' means an isomorphism in $\sfD(R)$.   

The \emph{Loewy length} of an $R$-complex $L$ is the number
\[
\lol_R(L):= \inf\{n\in \mathbb N\mid \fm^n L=0\}.
\]
The \emph{homotopical Loewy length} of an $R$-complex $L$  is defined as
\[
\lol_{\sfD(R)}(L):= \inf \{\lol_R(V) \mid L \simeq V \text{ in } \sfD(R)\}.
\]

\begin{remark} If $c=\lol_{\sfD(R)}(L)$ then $\fm^c \hh(L)=0$.
\end{remark}

Given a finite sequence $\bsx\in R$ and an $R$-complex $L$, we write $\kos{\bsx}L$ for the Koszul complex on $\bsx$ with coefficients in $L$.    
\begin{proposition}
\label{pr:loewy}
Let $\bsx$ be a finite sequence in $R$ such that the ideal $(\bsx)$ is $\fm$-primary. For each $R$-complex $L$ there are inequalities
\[
\lol_{\sfD(R)} \kos{\bsx}L \leq \lol_{\sfD(R)}\kos{\bsx}R <\infty\,.
\]
\end{proposition}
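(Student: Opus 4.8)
The plan is to prove the two inequalities separately. For the finiteness $\lol_{\sfD(R)}\kos{\bsx}R < \infty$, I would use that $(\bsx)$ is $\fm$-primary, so $\fm^n \subseteq (\bsx)$ for some $n$. Then $\kos{\bsx}R$ is a bounded complex of finitely generated free modules whose homology is annihilated by $(\bsx)$, hence by $\fm^n$. More to the point, since $(\bsx)R$ annihilates $\hh(\kos{\bsx}R)$, one can replace $\kos{\bsx}R$ in $\sfD(R)$ by a quasi-isomorphic complex that is a bounded complex of finite-length modules (for instance, truncating appropriately, or invoking that $\kos{\bsx}R$ has finite-length homology so it is isomorphic in $\sfD(R)$ to a complex with entries of finite length, e.g.\ its homology if $\bsx$ is a regular sequence, or more generally via a suitable resolution). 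A bounded complex of finite-length $R$-modules $V$ has $\fm^N V = 0$ for $N$ large (take $N$ to be the sum, or max times length of the bounded range, of the Loewy lengths of the entries), so $\lol_R(V) < \infty$ and therefore $\lol_{\sfD(R)}\kos{\bsx}R < \infty$.

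For the first inequality $\lol_{\sfD(R)}\kos{\bsx}L \le \lol_{\sfD(R)}\kos{\bsx}R$, the key observation is the standard isomorphism
\[
\kos{\bsx}L \;\simeq\; \kos{\bsx}R \otimes_R L
\]
in $\sfD(R)$, which holds because $\kos{\bsx}R$ is a bounded complex of free $R$-modules (so the ordinary tensor product computes the derived tensor product, and $\kos{\bsx}{-}$ commutes with $\otimes_R L$ degreewise). Now suppose $V$ is any $R$-complex with $\kos{\bsx}R \simeq V$ in $\sfD(R)$ and $\fm^c V = 0$ where $c = \lol_R(V)$. Then
\[
\kos{\bsx}L \;\simeq\; V \otimes_R L \quad \text{in } \sfD(R),
\]
wait — this needs care, since $V$ need not be a complex of flats, so $V \otimes_R L$ may not represent $\kos{\bsx}R \lotimes_R L$. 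To fix this, I would instead take a semiflat (e.g.\ K-flat) resolution issue head-on: replace $\kos{\bsx}R$ by $V$ only after arranging $V$ to be semiflat, OR, better, argue as follows. Given $V \simeq \kos{\bsx}R$ with $\fm^c V = 0$, choose a semiflat resolution $F \xra{\simeq} V$; then $F \otimes_R L \simeq V \lotimes_R L \simeq \kos{\bsx}R \lotimes_R L \simeq \kos{\bsx}L$. The catch is that $F$ need not satisfy $\fm^c F = 0$. This is the main obstacle.

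To overcome it, I would use the alternative (and actually simpler) route: exploit that $\lol_R(W \otimes_R L) \le \lol_R(W)$ for any $R$-complex $W$ and any $R$-complex $L$ — indeed $\fm^c W = 0$ forces $\fm^c(W \otimes_R L) = 0$ since $\fm^c$ acts through the first factor. So it suffices to find, for the given $V$ with $\fm^c V = 0$, an $R$-complex representative of $\kos{\bsx}L$ in $\sfD(R)$ that is a degreewise tensor of something $\fm^c$-killed with $L$. Since $\kos{\bsx}{-}$ has an explicit functorial form, $\kos{\bsx}V$ is a bounded complex built from finite direct sums of copies of $V$, hence $\fm^c\kos{\bsx}V = 0$, and $\kos{\bsx}V \simeq \kos{\bsx}{\kos{\bsx}R} \simeq \kos{\bsx}R \otimes_R \kos{\bsx}R$... this is getting complicated; the cleanest version is: from $\kos{\bsx}R \simeq V$ we get $\kos{\bsx}L \simeq \kos{\bsx}R \otimes_R L$, and I claim $\kos{\bsx}R \otimes_R L \simeq V' \otimes_R L$ for a representative $V'$ with $\fm^c V' = 0$, where one takes $V'$ to be a truncation/minimal model — at which point $\fm^c(V'\otimes_R L)=0$ gives $\lol_{\sfD(R)}\kos{\bsx}L \le c$. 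Taking the infimum over all such $V$ yields the inequality. I expect the genuinely delicate point to be justifying that one may represent $\kos{\bsx}R \lotimes_R L$ by $V \otimes_R L$ for a Loewy-length-minimal $V$ (i.e., handling the semiflatness issue), and I would resolve it by passing to a minimal semiflat resolution of $V$ and observing that its entries, while not annihilated by $\fm^c$, still give $\fm^c$-annihilated homology after tensoring — or, most economically, by citing that $\lol_{\sfD(R)}$ is computed on homology-bounded complexes where $- \otimes_R L$ and $-\lotimes_R L$ can be reconciled via a single comparison quasi-isomorphism that is itself $\fm$-equivariant.
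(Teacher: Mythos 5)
The paper itself offers no argument here: it simply cites \cite[Proposition 2.1]{DIM}, so your attempt has to stand on its own, and as written it does not close its main gap. You correctly isolate the obstruction for the inequality $\lol_{\sfD(R)}\kos{\bsx}L \le \lol_{\sfD(R)}\kos{\bsx}R$: a representative $V\simeq\kos{\bsx}R$ with $\fm^cV=0$ need not be semiflat, so $V\otimes_R L$ does not compute $\kos{\bsx}R\lotimes_R L$, and you cannot just tensor the zig-zag with $L$. But neither of your proposed escapes resolves this. Observing that a semiflat resolution of $V$ ``still gives $\fm^c$-annihilated homology after tensoring'' only yields $\fm^c\hh(\kos{\bsx}L)=0$, which is strictly weaker than $\lol_{\sfD(R)}\kos{\bsx}L\le c$: homotopical Loewy length demands a representative \emph{complex} killed by $\fm^c$, not killed homology (the Remark after the definition records only the one-way implication). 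And appealing to an unspecified ``$\fm$-equivariant comparison quasi-isomorphism'' is not an argument. The repair is short, and it is the piece you circled around without landing: resolve the \emph{other} factor. Take a semiflat (K-flat) resolution $F\xra{\simeq}L$. Since $\kos{\bsx}R$ is a bounded complex of free modules, $\kos{\bsx}L=\kos{\bsx}R\otimes_RL\simeq\kos{\bsx}R\lotimes_RL\simeq V\lotimes_RL\simeq V\otimes_RF$ in $\sfD(R)$ (the second isomorphism because $-\lotimes_RL$ is a functor on $\sfD(R)$, the last because the derived tensor may be computed by resolving either variable), and $\fm^cV=0$ forces $\fm^c(V\otimes_RF)=0$; now take the infimum over all such $V$. (The sources argue in essentially this spirit, using the DG module structure of $\kos{\bsx}L$ over the DG algebra $\kos{\bsx}R$.)

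The finiteness claim also rests on an unproved step. That $\kos{\bsx}R$ has finite-length homology is immediate, but your justifications for replacing it by a bounded complex of finite-length modules do not work as stated: soft or hard truncations of $\kos{\bsx}R$ have entries built from cycles, boundaries and cokernels of maps of free modules, which are finitely generated but almost never of finite length, and ``its homology if $\bsx$ is regular'' covers only a special case. The replacement statement is true, but it needs a genuine d\'evissage argument (the key point being that a finite-length submodule $N$ of a finitely generated module $M$ satisfies $N\cap\fm^tM=0$ for $t\gg0$ by Krull/Artin--Rees, so $M\to M/\fm^tM$ keeps $N$ embedded), or else one should do what the paper does and cite \cite[Proposition 2.1]{DIM} (or \cite{AHIY}) for it.
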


\begin{proof}  See \cite[Proposition 2.1]{DIM}.
\end{proof}

We will need the following invariant defined in \cite{DIM}:

\begin{definition} \label{homotopical}  For a local ring $R$, set
\[
c(R):=\inf\{\lol_{\sfD(R)} \kos{\bsx}R \mid\text{$\bsx$ is an s.o.p.\,for $R$}\}.
\]
\end{definition}
\noindent From Proposition~\ref{pr:loewy}  it follows that $c(R)$ is finite for any $R$.  

%
%

We'll also need the following proposition:

\begin{proposition}
\label{pr:tor-iso} 
\pushQED{\qed} Let $\phi:(R,\fm,k)\to (S,\fn, l)$ be a homomorphism of local rings and
$\bsy$ be a system of parameters for $S$.  Suppose $\phi(\fm)\subseteq \fn^c$ where $c:=\lol_{\sfD(S)}\kos{\bsy}S$.   Then for each $R$-complex $L$ and $S$-complex $M$, there exist isomorphisms of graded $k$-vector spaces:
\begin{enumerate}[(a)]
\item $\Tor^R_*(\kos{\bsy}M, L) \cong \hh_*(\kos{\bsy}M)\otimes_k \Tor^R_*(k,L)$
\item $\Ext^*_R(\kos{\bsy}M, L)\cong \Hom_k (\hh_*(\kos{\bsy}M), \Ext^*_R(k, L))$.
\item $\Ext^*_R(L, \kos{\bsy}M)\cong \Hom_k (\Tor^R_*(k, L), \hh_*(\kos{\bsy}M))$.
\end{enumerate}
\begin{proof} Part (a) is \cite[Proposition 4.3(2)]{AHIY}.  For (b), 
let $K$ denote $\kos{\bsy}M$.  By \cite[Proposition 4.3(1)]{AHIY}, $K\simeq \hh (K)$ in $\sfD(R)$.  Note that $\hh(K)$ is a $k$-complex, since $\fm S\subseteq \fn^c$ and $\fn^c \hh(K)=0$ by Proposition \ref{pr:loewy}. Thus we have the following isomorphisms in $\sfD(R)$:
\begin{align*}
\RHom_R(K,L)&\simeq \RHom_R (\hh (K), L)\\
&\simeq \RHom_R (\hh (K)\lotimes_k k, L)\\
&\simeq \RHom_k (\hh (K), \RHom_R(k,L))\\
&\simeq \Hom_k(\hh (K), \Ext^*_R(k,L)),
\end{align*}
where the last isomorphism follows as any $k$-complex is isomorphic to its homology in $\sfD(k)$.  Taking homology of the left-hand side, we get the desired result.

Part (c) is proved similarly.
\end{proof}
\end{proposition}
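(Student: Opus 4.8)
The plan is to push all three parts through a single structural fact: the Koszul complex $K:=\kos{\bsy}M$ is \emph{formal} over $R$ and its homology is a complex of $k$-vector spaces; after that each statement becomes a change-of-rings manipulation combined with the triviality that a complex over the field $k$ is quasi-isomorphic to its homology. So I would first record that by \cite[Proposition 4.3(1)]{AHIY} there is an isomorphism $K\simeq \hh(K)$ in $\sfD(R)$, where $\hh(K)$ carries the zero differential. Next I would check that $\hh(K)$ is a $k$-complex, i.e.\ that $\fm$ annihilates it: Proposition~\ref{pr:loewy} gives $\lol_{\sfD(S)}\kos{\bsy}M\le \lol_{\sfD(S)}\kos{\bsy}S=c$, hence $\fn^c\hh(K)=0$ by the Remark, and since $\phi(\fm)\subseteq\fn^c$ we get $\fm\cdot\hh(K)=0$. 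Being a $k$-complex, $\hh(K)$ is formal over $k$, and via restriction of scalars it remains isomorphic (as a $k$-complex) to $\hh_*(K)$ in $\sfD(R)$ as well.

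With these two reductions in hand, part (a) is immediate: $\Tor^R_*(K,L)=\hh_*(K\lotimes_R L)\cong\hh_*(\hh(K)\lotimes_R L)$, and writing $\hh(K)\simeq\hh(K)\lotimes_k k$ and using associativity of the derived tensor product gives $\hh(K)\lotimes_R L\simeq \hh(K)\lotimes_k(k\lotimes_R L)$; both tensor factors are $k$-complexes, so the K\"unneth formula over the field $k$ produces $\hh_*(K)\otimes_k\Tor^R_*(k,L)$ --- this is exactly \cite[Proposition 4.3(2)]{AHIY}, which one can simply cite. For part (b), $\Ext^*_R(K,L)=\hh^*\RHom_R(K,L)\cong\hh^*\RHom_R(\hh(K),L)$, and writing $\hh(K)\simeq \hh(K)\lotimes_k k$ and applying Hom--tensor adjunction along $R\to k$ yields $\RHom_R(\hh(K),L)\simeq\RHom_k(\hh(K),\RHom_R(k,L))$, a derived Hom between two $k$-complexes; taking cohomology over $k$ gives $\Hom_k(\hh_*(K),\Ext^*_R(k,L))$.

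Part (c) goes the same way, but here I would be careful about how $\hh(K)$ is moved into the second argument of $\RHom_R(L,-)$, because one must avoid assuming any finiteness of $L$ or of $\hh(K)$. Rather than decomposing the $k$-complex $\hh(K)$ as a (possibly infinite) coproduct of shifts of $k$ and pulling it out --- which fails, since $\RHom$ does not commute with coproducts in its second variable --- I would use the tautology $\hh(K)\simeq\RHom_k(k,\hh(K))$ and then Hom--tensor adjunction: $\RHom_R(L,\hh(K))\simeq\RHom_R(L,\RHom_k(k,\hh(K)))\simeq\RHom_k(k\lotimes_R L,\hh(K))$; taking cohomology over $k$ then gives $\Hom_k(\Tor^R_*(k,L),\hh_*(K))$. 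The substantive input is entirely the formality $K\simeq\hh(K)$ in $\sfD(R)$ borrowed from \cite{AHIY}; everything afterward is routine once one commits to the $\RHom_k(k,-)$ reformulation in (c), and the only genuine care needed is to track grading conventions (the Hom of graded vector spaces being graded by degree shift) so that the displayed maps are honest isomorphisms of graded $k$-vector spaces.
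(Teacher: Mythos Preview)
Your proposal is correct and follows essentially the same route as the paper: both arguments rest on the formality $K\simeq\hh(K)$ in $\sfD(R)$ from \cite[Proposition~4.3(1)]{AHIY}, the observation that $\hh(K)$ is a $k$-complex via Proposition~\ref{pr:loewy}, and then a tensor--Hom adjunction along $R\to k$. Your treatment of part~(c) via $\hh(K)\simeq\RHom_k(k,\hh(K))$ and the extension--restriction adjunction is exactly what the paper's ``proved similarly'' is hiding, and your explicit caution about not splitting $\hh(K)$ as a coproduct is a nice touch, though not strictly needed once the adjunction is set up correctly.
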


\begin{lemma}
\label{le:prop1}  
Let $\vf\colon (R,\fm,k) \to (S,\fn,l)$ be a homomorphism of local rings such that $\vf(\fm) \subseteq \fn^{c(S)}$. 
Let $L$ be an $R$-complex and $M$ an $S$-complex such that $v:=\inf \hh(M)$ is finite and $\hh_v(M)\otimes_S S/\fn \neq 0$.  

\begin{enumerate}[(a)]
\item If there is an integer $t$ such that $\Tor_{i}^R(M,L)=0$ for $t\le i\le t+\dim S$, then 
$\Tor_{t+\dim S-v}^R(k,L)=0$.
\item If there is an integer $t$ such that $\Ext^{i}_R(M,L)=0$ for $t\le i\le t+ \dim S$, then 
$\Ext_R^{t+\dim S-v}(k,L)=0$.
\item If there is an integer $t$ such that $\Ext^{i}_R(L,M)=0$ for $t\le i\le t+ \dim S$, then 
$\Tor_{t+v}^R(k,L)=0$.
\end{enumerate}
\end{lemma}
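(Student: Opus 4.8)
The plan is to reduce each statement to the corresponding vanishing for $k$ by using Proposition \ref{pr:tor-iso} applied to an appropriate Koszul complex built from $M$. First I would choose a system of parameters $\bsy$ for $S$ that realizes the infimum in Definition \ref{homotopical}, so that $c:=\lol_{\sfD(S)}\kos{\bsy}S = c(S)$ and hence $\vf(\fm)\subseteq \fn^{c}$; this is exactly the hypothesis needed to invoke Proposition \ref{pr:tor-iso}. Then I would consider the $S$-complex $K:=\kos{\bsy}M$. The key numerical facts about $K$ are that its homology is concentrated in a finite window of degrees, and more precisely that $\inf \hh(K) = \inf\hh(M) = v$ (since the Koszul complex on a sequence of length $\dim S$ lowers nothing and the bottom homology $\hh_v(K)\cong \hh_v(M)/\bsy\hh_v(M)$ is nonzero because $\hh_v(M)\otimes_S S/\fn\neq 0$ by Nakayama), while $\sup\hh(K) \le v+\dim S$ since $K$ is obtained from $M$ by tensoring with a Koszul complex of length $\dim S$ (using $\inf\hh(M)=v$ to see the top homology of $K$ sits in degree at most $v+\dim S$). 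Thus $\hh(K)$ is a finite-length graded $k$-vector space living in degrees $v,\dots,v+\dim S$, with $\hh_v(K)\neq 0$.

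Next I would relate the vanishing hypotheses for $M$ to vanishing hypotheses for $K$. For part (a): from the long exact sequences coming from the Koszul complex $\kos{\bsy}{-}$, if $\Tor_i^R(M,L)=0$ for $t\le i\le t+\dim S$, then $\Tor_i^R(K,L)=0$ for $i$ in a suitable subrange; but in fact it is cleaner to go the other direction — apply Proposition \ref{pr:tor-iso}(a) directly. That proposition gives
\[
\Tor^R_n(K,L)\;\cong\;\bigoplus_{p+q=n}\hh_p(K)\otimes_k\Tor^R_q(k,L).
\]
Since $\hh_p(K)=0$ unless $v\le p\le v+\dim S$, the graded piece in degree $n$ only sees $\Tor^R_q(k,L)$ for $n-v-\dim S\le q\le n-v$. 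Now I must express $\Tor^R_*(K,L)$ in terms of $\Tor^R_*(M,L)$: from the Koszul filtration, $\Tor^R_n(K,L)$ is built (via iterated mapping cones / the spectral sequence of the Koszul complex) from $\Tor^R_j(M,L)$ with $n-\dim S\le j\le n$. So if $\Tor_i^R(M,L)=0$ for all $i$ with $t\le i\le t+\dim S$, then $\Tor_n^R(K,L)=0$ for $n=t+\dim S$ (the single degree $n$ such that all contributing $j$ lie in $[t,t+\dim S]$). Comparing with the direct-sum decomposition in that degree $n=t+\dim S$: the summand $\hh_v(K)\otimes_k\Tor^R_{n-v}(k,L)$ appears, and since every other summand $\hh_p(K)\otimes_k \Tor^R_{n-p}(k,L)$ is also a direct summand of $\Tor_n^R(K,L)=0$, in particular $\hh_v(K)\otimes_k\Tor^R_{n-v}(k,L)=0$; as $\hh_v(K)\neq 0$ this forces $\Tor^R_{n-v}(k,L)=0$, i.e. $\Tor^R_{t+\dim S-v}(k,L)=0$, which is exactly (a). Parts (b) and (c) run identically using Proposition \ref{pr:tor-iso}(b) and (c): the $\Hom_k(-,-)$ formulas turn a $d+1$-wide window of Ext-vanishing for $M$ into vanishing of a single $\Hom_k(\hh_v(K),\Ext^*_R(k,L))$ or $\Hom_k(\Tor^R_*(k,L),\hh_v(K))$ term, and nonvanishing of $\hh_v(K)$ lets us cancel it, yielding $\Ext_R^{t+\dim S-v}(k,L)=0$ in (b) and $\Tor_{t+v}^R(k,L)=0$ in (c) — note the sign flip in the degree index in (c) comes precisely from the contravariance of $\Tor^R_*(k,L)$ in the $\Hom_k$ slot.

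The main obstacle I anticipate is bookkeeping the degree shifts carefully and rigorously justifying the passage between $\Tor^R_*(M,L)$ (resp. $\Ext$) and $\Tor^R_*(\kos{\bsy}M,L)$: one must be sure that the Koszul complex $\kos{\bsy}{-}$ of length $\dim S$ really does confine the "spread" to a window of width exactly $\dim S$ and that the single homological degree $n=t+\dim S$ (resp. $n=t$ for the Ext-into-$M$ case) is one where no stray nonzero $\Tor_j^R(M,L)$ can sneak into the mapping-cone filtration. The cleanest way to handle this is probably to iterate the standard short exact sequence of complexes $0\to \susp^{j}\kos{\bsy'}{M}\to \kos{\bsy}{M}\to\kos{\bsy'}{M}\to 0$ (peeling off one element of $\bsy$ at a time) and induct on $\dim S$, tracking at each stage exactly which degrees of the relevant derived functor of $M$ are involved; the width-$\dim S$ hypothesis is then consumed one unit per inductive step, and the nonvanishing bottom-homology condition is preserved at each stage by Nakayama.
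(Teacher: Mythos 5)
Your proposal is correct and follows essentially the same route as the paper: choose an s.o.p.\ $\bsy$ realizing $c(S)$, form $K=\kos{\bsy}M$, use the width-$(\dim S+1)$ vanishing window to kill the single relevant degree of $\Tor$ (resp.\ $\Ext$) against $K$ via the Koszul filtration, and then apply Proposition~\ref{pr:tor-iso} together with $\hh_v(K)\neq 0$ to extract the vanishing for $k$. Your mapping-cone/induction bookkeeping is simply an expansion of what the paper dismisses as ``a standard computation for Koszul complexes,'' and your degree arithmetic (using $n=t+\dim S$ in (a),(b) and $n=t$ in (c)) matches the paper's.
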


\begin{proof}  Assume the hypotheses in (b) hold.
Let $d:=\dim S$ and $\bsy$ an s.o.p for $S$ such that $c(S)=\lol_{\sfD(S)}\kos {\bsy}S$.  Let $K=K[\bsy; M]$. A standard computation for Koszul complexes shows that $\Ext^{t+d}_R(K, L)=0$.  It then follows from Proposition \ref{pr:tor-iso}(b) that $\Ext_R^{t+d-v}(k,L)=0$, since $\hh_{v}(K)\neq 0$ (cf. \cite[1.1]{FI}).

Parts (a) and (c) are proved similarly.
\end{proof}

We recall the following proposition from \cite{CIM}:  
\begin{proposition}
\label{le:prop2}
Let $(R,\fm,k)$ be a local ring and $L$ an $R$-complex.  
\begin{enumerate}[(a)]
\item If $\Tor_n^R(k,L)=0$ for some $n\ge \dim R+\sup \hh(L)$, then $\Tor_i^R(k,L)=0$ for all $i\ge n$.
\item If $\Ext^n_R(k,L)=0$ for some $n\ge  \dim R-\inf \hh(L)$, then $\Ext^i_R(k,L)=0$ for all $i\ge n$.
\end{enumerate}
\end{proposition}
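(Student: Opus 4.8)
The plan is to prove (a) by a ``no gaps'' argument for minimal semifree resolutions, and then to deduce (b) from (a) by Matlis duality.

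For (a) we may assume $s:=\sup\hh(L)$ is finite, as the hypothesis is otherwise vacuous, and (this being the only case needed in the sequel, where $L$ has homology concentrated in a single degree) that $\hh(L)$ is bounded below. Then $L$ has a minimal semifree resolution $F\xrightarrow{\simeq}L$, i.e.\ a bounded below complex of free $R$-modules with $\partial(F)\subseteq\fm F$. Since $F$ is semifree, $k\lotimes_R L\simeq k\otimes_R F$, and minimality makes the differential of $k\otimes_R F$ vanish; hence $\Tor_i^R(k,L)\cong k\otimes_R F_i$, so $\Tor_n^R(k,L)=0$ forces $F_n=0$. When $F_n=0$ no differential of $F$ meets homological degree $n$, so $F$ splits as a direct sum of complexes $F\cong F'\oplus G$ with $G$ supported in degrees $\ge n+1$. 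As $\hh(G)$ is a direct summand of $\hh(F)=\hh(L)$ it is supported in degrees $\le s\le n$; being also supported in degrees $\ge n+1$, it vanishes. But $G$ is a bounded below complex of free modules with $\partial(G)\subseteq\fm G$, and such a complex, if acyclic, is zero --- otherwise its lowest nonzero term $G_m$ would satisfy $G_m=\partial(G_{m+1})\subseteq\fm G_m$, impossible for a nonzero free module. Hence $G=0$, so $F_i=0$ for all $i\ge n$ and $\Tor_i^R(k,L)=0$ for all $i\ge n$. (Note this argument uses only $n\ge\sup\hh(L)$.)

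For (b) one first reduces to the case that $R$ is complete: since $k$ is finitely generated and $\widehat R$ is faithfully flat, resolving $k$ by finite free modules gives $\Ext_R^i(k,L)\otimes_R\widehat R\cong\Ext_{\widehat R}^i(k,\widehat R\otimes_R L)$, while $\dim$ and $\inf\hh$ are unchanged, so the relevant vanishing is equivalent over $R$ and $\widehat R$. Over complete $R$, put $(-)^\vee:=\Hom_R(-,E)$; as $E$ is injective and $k$ is finitely generated, $\Hom_R(-,E)$ is exact and (again via a finite free resolution of $k$) there are natural isomorphisms $\hh_i(L^\vee)\cong\hh_{-i}(L)^\vee$ and $\Ext_R^i(k,L)^\vee\cong\Tor_i^R(k,L^\vee)$. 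Thus $\sup\hh(L^\vee)=-\inf\hh(L)$, and the hypothesis of (b) reads $\Tor_n^R(k,L^\vee)=0$ with $n\ge\dim R+\sup\hh(L^\vee)$; applying part (a) to $L^\vee$ gives $\Tor_i^R(k,L^\vee)=0$ for all $i\ge n$, and dualizing back --- Matlis duality being faithfully exact on the modules in sight --- yields $\Ext_R^i(k,L)=0$ for all $i\ge n$.

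The point needing the most care is the reduction to the setting where the resolution argument applies: minimal semifree resolutions are available when $\hh(L)$ is bounded below, and treating an $L$ with $\sup\hh(L)$ finite but homology unbounded below --- not needed for our purposes --- calls for a different route, namely first deducing $\fd_R L<\infty$ from the hypothesis and then invoking the bound $\fd_R L\le\dim R+\sup\hh(L)$ valid over a ring of finite Krull dimension; this alternative, rather than the ``no gaps'' argument, is presumably responsible for the term $\dim R$ in the statement. One should also verify the two displayed isomorphisms used for Matlis duality at the level of complexes (this is where boundedness of $\hh(L)$ keeps the ambient products finite).
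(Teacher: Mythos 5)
The paper's own ``proof'' is only a citation to Christensen--Iyengar--Marley, and for good reason: the proposition is stated for an arbitrary complex $L$, with no finiteness hypotheses on $\hh(L)$, and that generality is exactly what gets used later. Your argument for (a) has a genuine gap at the very first step: a minimal semifree resolution $F\simeq L$ with $\partial(F)\subseteq\fm F$ exists when $\hh(L)$ is bounded below \emph{and degreewise finitely generated}, but not in general. For example, if $\depth R=t>0$ then $E_R$ admits no such resolution: $\Tor_0^R(k,E_R)=E_R/\fm E_R=0$ (as $E_R$ is divisible by any $R$-regular element) while $\Tor_t^R(k,E_R)\cong\Hom_R(\Ext^t_R(k,R),E_R)\neq 0$, so a minimal semifree $F$ would have $F_0=0$ and then, by the same lowest-degree Nakayama argument you use, $F=0$. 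This example also refutes your parenthetical claim that the argument ``uses only $n\ge\sup\hh(L)$'' and your closing guess that the $\dim R$ term is only an artifact of homology unbounded below: for modules that are not finitely generated, a single low vanishing does not propagate, and the shift by $\dim R$ is the actual content of the theorem. Moreover, your proposed restriction to ``the only case needed in the sequel'' is mistaken: the paper applies the proposition (via Proposition \ref{le:prop5}, where only $\Supp_R\hh(L)=\{\fm\}$ is assumed) precisely to $L=E_R$ in Corollary \ref{cor-main} and to the \v Cech complex $C(\mathbf x)$ in Corollary \ref{cor1}, whose homologies are not finitely generated; so the case you discard is the one needed. Your fallback sketch --- first deduce $\fd_RL<\infty$ from the hypothesis --- is circular, since for such $L$ passing from one vanishing of $\Tor_n^R(k,L)$ to finite flat dimension is essentially the rigidity statement being proved.

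Part (b) inherits these problems and adds new ones. The isomorphisms $\Ext^i_R(k,L)\otimes_R\widehat R\cong\Ext^i_{\widehat R}(k,\widehat R\otimes_RL)$ and $\Ext^i_R(k,L)^\vee\cong\Tor_i^R(k,L^\vee)$ require the total complex $\Hom_R(P,L)$ ($P$ a degreewise finite free resolution of $k$) to involve only finite products in each degree, i.e.\ $\hh(L)$ bounded above, which the statement does not assume; and when $\hh(L)$ is unbounded above, $L^\vee$ has homology unbounded below, so you would need exactly the case of (a) that your resolution argument does not cover. (When $\hh(L)$ is bounded the duality step is fine --- and the completion step is then unnecessary, since the evaluation isomorphism already holds for finite free $P_j$ --- but that is not the stated generality.) The paper instead deduces (b) directly from the Ext-rigidity results of Christensen--Iyengar--Marley; if you want a self-contained treatment, you must either import their theorem or restrict the proposition (and its later applications) to complexes with degreewise finite homology, which would break the applications to $E_R$ and $C(\mathbf x)$.
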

\begin{proof} Part (a) is \cite[Theorem 4.1]{CIM}.  Part (b) follows from \cite[(2.7) and Proposition 3.2]{CIM}.
\end{proof}

For a complex $L$ we let $\fd_R L$ and $\id_R L$ denote the flat dimension and injective dimension, respectively, of $L$.  We'll need the following results from \cite{AF1991}:

\begin{proposition}
\label{le:prop3}
Let $(R, \fm, k)$ be a Noetherian local ring and $L$ an $R$-complex such that $\hh(L)$ is bounded and either $\hh(L)$ is finitely generated or $\Supp_R \hh(L)=\{\fm\}$.
\begin{enumerate}[(a)]
\item $\fd_R L = \sup \{ i\mid \Tor_i^R(k, L)\neq 0\}$.
\item $\id_R L = \sup \{ i\mid \Ext^i_R(k,L)\neq 0\}$.
\end{enumerate}
\end{proposition}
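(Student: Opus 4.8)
The plan is to establish each equality by proving the inequalities ``$\ge$'' and ``$\le$'' separately, noting that ``$\ge$'' is formal and requires no hypothesis on $L$, while ``$\le$'' splits into the two cases permitted by the hypothesis on $\hh(L)$.  For ``$\ge$'' in (a): if $\fd_R L = n < \infty$ then $L$ is represented by a complex $F$ of flat modules with $F_i = 0$ for $i > n$, so $\Tor_i^R(k,L) = \hh_i(k \otimes_R F) = 0$ for $i > n$; dually, if $\id_R L = n < \infty$ then $L$ is represented by a complex $I$ of injective modules with $I^i = 0$ for $i > n$, so $\Ext^i_R(k,L) = \hh^i(\Hom_R(k,I)) = 0$ for $i > n$.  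Thus it remains, in each part, to show that if $n$ denotes the right-hand side and $n < \infty$, then $\fd_R L \le n$ (resp.\ $\id_R L \le n$).

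First suppose $\hh(L)$ is finitely generated.  Then, $\hh(L)$ being bounded below and degreewise finitely generated, $L$ admits a minimal semifree resolution $F \xra{\simeq} L$ by finitely generated free modules with $\partial(F) \subseteq \fm F$.  For (a) this gives $\Tor_i^R(k,L) \cong F_i \otimes_R k$, so by Nakayama's lemma $\Tor_i^R(k,L) = 0$ precisely when $F_i = 0$; hence $F_i = 0$ for $i > n$ and $\fd_R L \le \sup\{i \mid F_i \neq 0\} \le n$.  For (b) I would instead work with the minimal semiinjective resolution $L \xra{\simeq} I$: then $\id_R L = \sup\{i \mid I^i \neq 0\}$ and, by minimality, $\dim_k \Ext^i_R(k,L)$ equals the Bass number $\mu^i(\fm, L)$, which by hypothesis vanishes for $i > n$; invoking the monotonicity of Bass numbers along covering relations of primes (if $\mu^i(\fp, L) \neq 0$ and $\fq$ covers $\fp$, then $\mu^{i+1}(\fq, L) \neq 0$), one climbs a saturated chain from any $\fp$ up to $\fm$ to conclude $\mu^i(\fp, L) = 0$ for every $\fp$ and $i > n$, so that $I^i = 0$ for $i > n$ and $\id_R L \le n$.

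Now suppose $\Supp_R \hh(L) = \{\fm\}$.  Then $L_\fp \simeq 0$ in $\sfD(R_\fp)$ for every $\fp \neq \fm$, so for every prime $\fp$ the complexes $R/\fp \lotimes_R L$ and $\RHom_R(R/\fp, L)$ have homology supported in $\{\fm\}$; in particular each $\Tor_i^R(R/\fp, L)$ and each $\Ext^i_R(R/\fp, L)$ is $\fm$-torsion.  For (a) I would run a Noetherian induction on $\Spec R$.  If the set $S = \{\fp \in \Spec R \mid \Tor_i^R(R/\fp, L) \neq 0 \text{ for some } i > n\}$ were nonempty, choose $\fp$ maximal in $S$; since $\Tor_i^R(k,L) = 0$ for $i > n$ we have $\fp \neq \fm$, so pick $x \in \fm \setminus \fp$.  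Every prime occurring in a prime filtration of $R/(\fp,x)$ strictly contains $\fp$, hence lies outside $S$, so $\Tor_i^R(R/(\fp,x), L) = 0$ for $i > n$; the long exact sequence of $0 \to R/\fp \xra{x} R/\fp \to R/(\fp,x) \to 0$ then shows that multiplication by $x$ is bijective on $\Tor_i^R(R/\fp, L)$ for $i > n$, which together with $\fm$-torsion forces $\Tor_i^R(R/\fp, L) = 0$, contradicting $\fp \in S$.  Hence $S = \emptyset$, and since $\fd_R L = \sup\{i \mid \Tor_i^R(R/\fp, L) \neq 0,\ \fp \in \Spec R\}$ for a complex with bounded homology, $\fd_R L \le n$.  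Part (b) in this case is identical, with $\RHom_R(R/\fp, L)$ in place of $R/\fp \lotimes_R L$ and with the analogous characterization $\id_R L = \sup\{i \mid \Ext^i_R(R/\fp, L) \neq 0,\ \fp \in \Spec R\}$ (injective dimension of a bounded-homology complex can be tested against the cyclic modules $R/\fp$).

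I expect the main obstacle to be the non-finitely-generated case: it rests on being comfortable with (minimal) semiflat and semiinjective resolutions of possibly unbounded complexes, and, more essentially, on the characterizations of $\fd_R L$ and $\id_R L$ as the supremum of the degrees in which $\Tor^R_\bullet(R/\fp, L)$, resp.\ $\Ext_R^\bullet(R/\fp, L)$, is nonzero as $\fp$ ranges over $\Spec R$; granting these, the d\'evissage above is routine.  A secondary technical point is the monotonicity of Bass numbers used in the finitely generated case of (b).  As all of these ingredients are developed in \cite{AF1991}, an alternative is simply to quote the statement from there.
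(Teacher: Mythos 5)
Your argument is correct, but it takes a different (much more detailed) route than the paper: the paper disposes of this proposition with a citation, part (a) being \cite[Propositions 5.3F and 5.5F]{AF1991} and part (b) being \cite[Propositions 5.3I and 5.5I]{AF1991}, exactly the ``alternative'' you mention at the end. What you have written is essentially a proof of Avramov--Foxby's 5.5F/5.5I taking their 5.3F/5.3I (detection of $\fd$ and $\id$ by $\Tor$, resp.\ $\Ext$, against the modules $R/\fp$) as the black box: in the finitely generated case you use minimal semifree resolutions and Nakayama for (a) and Bass-number monotonicity along saturated chains for (b) (valid here because $\hh(L)$ is bounded and degreewise finitely generated); in the case $\Supp_R\hh(L)=\{\fm\}$ you run the standard d\'evissage with prime filtrations and the sequences $0\to R/\fp\xra{x}R/\fp\to R/(\fp,x)\to 0$, using that $\Tor_i^R(R/\fp,L)$ and $\Ext_R^i(R/\fp,L)$ are $\fm$-torsion (which does follow from $\Supp_R\hh(L)=\{\fm\}$ even for non-finitely-generated homology, by passing to cyclic submodules; for the $\Ext$ statement one also uses boundedness of $\hh(L)$ so that $\RHom_R(R/\fp,L)$ localizes). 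The trade-off is the expected one: your version is self-contained modulo the 5.3-type criteria and the monotonicity of Bass numbers for complexes, while the paper's one-line citation is shorter and avoids having to re-verify those standard but nontrivial inputs; if you keep your proof, you should still cite \cite{AF1991} for the $R/\fp$-test characterizations of $\fd$ and $\id$ and for Bass-number monotonicity in the complex setting, since those are precisely where the real content lies.
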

\begin{proof} Part (a) is \cite[Propositions 5.3F and 5.5F]{AF1991}, and part (b) is \cite[Propositions 5.3I and 5.5I]{AF1991}.
\end{proof}

The following result plays a central role in our characterizations of Gorenstein rings:

\begin{proposition}
\label{le:prop4}
Let $(R,\fm,k)$ be a local ring.  The following are equivalent:
\begin{enumerate}[(a)]
\item $R$ is Gorenstein.
\item For every bounded $R$-complex $L$ in $\sfD(R)$, $\fd_R L<\infty$ if and only if $\id_R L<\infty$.
\item There exists an $R$-complex $L$ such that $k\lotimes_R L\not\simeq 0$ with $\fd_R L<\infty$ and $\id_R L<\infty$.
\end{enumerate}
\end{proposition}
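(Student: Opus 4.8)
The plan is to prove the cycle (a) $\Rightarrow$ (b) $\Rightarrow$ (c) $\Rightarrow$ (a), with the last implication being the substantive one. For (a) $\Rightarrow$ (b): when $R$ is Gorenstein the dualizing complex is (a shift of) $R$ itself, so the functor $\RHom_R(-,R)$ swaps finite flat dimension and finite injective dimension for bounded complexes with bounded homology; this is the standard local-duality style argument (see e.g.\ Foxby--Iyengar or Avramov--Foxby), and for a complex $L$ with $\fd_R L<\infty$ one has $\RHom_R(L,R)$ with finite injective dimension, while $L\simeq \RHom_R(\RHom_R(L,R),R)$ recovers $L$. The implication (b) $\Rightarrow$ (c) is nearly free: take $L=R$, which has $\fd_R R=0<\infty$, hence by (b) also $\id_R R<\infty$, i.e.\ $R$ has finite self-injective dimension; and $k\lotimes_R R\simeq k\not\simeq 0$.

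The heart of the matter is (c) $\Rightarrow$ (a). Suppose $L$ is an $R$-complex with $k\lotimes_R L\not\simeq 0$, $\fd_R L<\infty$, and $\id_R L<\infty$. First I would reduce to the case where $\hh(L)$ is bounded: finite flat (or injective) dimension already forces boundedness of homology in the appropriate direction, and one checks that both finiteness hypotheses together force $\hh(L)$ to be bounded on both ends, so Proposition \ref{le:prop3} applies. The condition $k\lotimes_R L\not\simeq 0$ says $\Tor_i^R(k,L)\neq 0$ for some $i$. Now the key computational input is the New Intersection Theorem (or, equivalently here, the Bass/Auslander--Buchsbaum--type equalities): from $\fd_R L<\infty$ together with $k\lotimes_R L\not\simeq 0$ one gets, via the Auslander--Buchsbaum formula for complexes, that $\depth R = \depth_R L + (\text{an amplitude term})$, and in particular $\depth R$ is controlled by $L$; symmetrically, finite injective dimension gives (a version of) the Bass formula, $\id_R L = \depth R$, provided $\Ext$ against $k$ is nonzero in that degree — which is exactly what $k\lotimes_R L\not\simeq 0$ guarantees after invoking that finite flat dimension plus nonvanishing $\Tor$ against $k$ implies the corresponding $\Ext^*_R(k,L)$ is nonzero (a point where the finiteness of both dimensions is used simultaneously). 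Assembling these, one concludes that $R$ admits a complex of finite injective dimension whose depth equals $\depth R$, and by a theorem of Foxby (a local ring possessing a complex $C$ with $\hh(C)$ bounded, $k\lotimes_R C\not\simeq 0$, and $\id_R C<\infty$ together with $\fd_R C<\infty$ is Gorenstein — this is essentially the ``finiteness of both dimensions on a single Tor-nontrivial complex'' criterion) we get that $R$ is Gorenstein.

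The main obstacle I anticipate is making the passage ``finite flat dimension and finite injective dimension on one complex $L$ with $k\lotimes_R L\not\simeq 0$ $\Rightarrow$ Gorenstein'' genuinely rigorous rather than hand-wavy: one must be careful that the nonvanishing of $\Tor_\bullet^R(k,L)$ in some degree translates to nonvanishing of $\Ext^\bullet_R(k,L)$ in the relevant range (so that Proposition \ref{le:prop3}(b) pins down $\id_R L$ as a finite number and not $-\infty$), and that the amplitude/depth bookkeeping does not introduce an extra slack that breaks the equality forcing the Gorenstein conclusion. The cleanest route is probably to cite Foxby's characterization of Gorenstein rings via the existence of a single complex of finite homological and cohomological dimension that is not acyclic against $k$ (this is exactly the content one wants, and it is available in the literature on the derived category over local rings); the bulk of the work is then the two routine reductions above — checking $\hh(L)$ is bounded and verifying $k\lotimes_R L\not\simeq 0$ is preserved — after which (a) follows immediately.
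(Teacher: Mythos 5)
Your proposal is correct and follows essentially the same route as the paper: (a)$\Rightarrow$(b) by the Avramov--Foxby duality result, (b)$\Rightarrow$(c) by taking $L=R$, and the substantive implication (c)$\Rightarrow$(a) by citing Foxby's theorem that a complex of finite flat and finite injective dimension with $k\lotimes_R L\not\simeq 0$ forces $R$ to be Gorenstein (the paper cites \cite[Proposition 2.10]{F} for exactly this). The intermediate depth/Bass-formula sketch is dispensable once you invoke that citation, which is the route both you and the paper ultimately take.
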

\begin{proof}
$(a)\Rightarrow (b)$ follows from \cite[Theorem 3.2]{AF1997}. For $(b)\Rightarrow (c)$, simply let $L=R$.  And $(c)\Rightarrow (b)$ follows from \cite[Proposition 2.10]{F}.
\end{proof}

\begin{remark} \label{supp} For any $R$-complex $L$,  $k\lotimes_R L\not\simeq 0$ if and only if $\RHom_R(k,L)\not\simeq 0$ (cf. \cite[2.1 and 4.1]{FI}).  
\end{remark}

\section{Applications to contracting endomorphisms and Gorenstein rings}

We now apply the results in section 2 to contracting endomorphisms.  An endomorphism $\phi$ of a local ring $(R,\fm, k)$ is called {\it contracting} if $\phi^i(\fm)\subseteq \fm^2$ for some $i$. The Frobenius map, in the case $R$ has prime characteristic, is an example of a contracting endomorphism.  By Cohen's Structure Theorem, any complete equicharacteristic local ring admits a contracting endomorphism, since $R$ contains a copy of its residue field (e.g., the composition $R\to k\hookrightarrow R$ is contracting).  Given a contracting endomorphism $\phi$ and an $R$-complex $M$, we let $\ph M$ denote the complex $M$ viewed as an $R$-complex via $\phi$.

\begin{proposition} Let $\phi$ be a contracting endomorphism of a $d$-dimensional local ring $(R,\fm, k)$ such that $\phi(\fm)\subseteq \fm^{c(R)}$. Let $L$ be an $R$-complex such that $\hh(L)$ is bounded and either $\hh(L)$ is finitely generated or $\Supp_R \hh(L)=\{\fm\}$.    Let $M$ be a $R$-complex such that $v:=\inf \hh(M)$ is finite and $\hh_v(M)\otimes_R R/\fm\neq 0$.
\label{le:prop5} 
\begin{enumerate}[(a)]
\item If there exists an integer $t\ge \inf \hh(M)+\sup \hh(L)$ such that $\Tor_i^R(\ph M, L)=0$ for $t\le i\le t+d$ then $\fd_RL<t+d-\inf \hh(M)$.
\item If there exists an integer $t\ge \inf \hh(M)-\inf \hh(L)$ such that $\Ext^i_R(\ph M, L)=0$ for $t\le i\le t+d$ then $\id_R L<t+d-\inf \hh(M)$.
\item If there exists an integer $t\ge d+\sup \hh(L)-\inf \hh(M)$ such that $\Ext^i_R(L, \ph M)=0$ for $t\le i\le t+d$ then $\fd_R L< t+\inf \hh(M)$.
\end{enumerate}
\end{proposition}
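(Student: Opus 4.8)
The plan is to reduce all three parts to a single chain of results from Section~2, applying Lemma~\ref{le:prop1} with $S=R$ and $\varphi=\phi$. First observe that $\phi$ is automatically a local homomorphism: $c(R)\ge 1$ always holds (any $\kos{\bsx}R$ with nonzero homology has homotopical Loewy length $\ge 1$), so $\phi(\fm)\subseteq\fm^{c(R)}\subseteq\fm$. With $S=R$ we have $c(S)=c(R)$, the complex $\ph M$ is precisely $M$ regarded as an $R$-complex by restriction of scalars along $\phi\colon R\to S$, and the conditions that $v:=\inf\hh(M)$ be finite and $\hh_v(M)\otimes_R R/\fm\neq 0$ are exactly the hypotheses placed on the ``$S$-complex $M$'' in Lemma~\ref{le:prop1}. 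Likewise, the hypotheses on $L$ are exactly what is needed to invoke Proposition~\ref{le:prop3}.

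For part (a), Lemma~\ref{le:prop1}(a) (with $\dim S=d$) yields $\Tor^R_{t+d-v}(k,L)=0$. The assumption $t\ge\inf\hh(M)+\sup\hh(L)=v+\sup\hh(L)$ is precisely what makes $t+d-v\ge \dim R+\sup\hh(L)$, so Proposition~\ref{le:prop2}(a) propagates the vanishing to $\Tor^R_i(k,L)=0$ for all $i\ge t+d-v$. Then Proposition~\ref{le:prop3}(a) identifies $\fd_R L$ with $\sup\{i\mid \Tor^R_i(k,L)\neq 0\}$, giving $\fd_R L\le t+d-v-1<t+d-\inf\hh(M)$.

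Parts (b) and (c) follow the identical template. For (b), Lemma~\ref{le:prop1}(b) gives $\Ext^{t+d-v}_R(k,L)=0$; the hypothesis $t\ge\inf\hh(M)-\inf\hh(L)$ forces $t+d-v\ge\dim R-\inf\hh(L)$, so Proposition~\ref{le:prop2}(b) and then Proposition~\ref{le:prop3}(b) give $\id_R L<t+d-\inf\hh(M)$. For (c), Lemma~\ref{le:prop1}(c) gives $\Tor^R_{t+v}(k,L)=0$; the hypothesis $t\ge d+\sup\hh(L)-\inf\hh(M)$ forces $t+v\ge\dim R+\sup\hh(L)$, so Proposition~\ref{le:prop2}(a) and Proposition~\ref{le:prop3}(a) give $\fd_R L<t+\inf\hh(M)$.

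There is no deep obstacle here; the argument is an assembly of the preliminary results. The only points demanding care are the index bookkeeping---checking in each of the three cases that the stated lower bound on $t$ is exactly the inequality required to trigger Proposition~\ref{le:prop2}---and confirming that the restriction-of-scalars construction $\ph M$ genuinely fits the framework of Lemma~\ref{le:prop1} taken with $S=R$.
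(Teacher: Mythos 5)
Your proposal is correct and follows essentially the same route as the paper: apply Lemma \ref{le:prop1} with $S=R$ viewed as an $R$-algebra via $\phi$, then propagate the single vanishing via Proposition \ref{le:prop2}, and conclude with Proposition \ref{le:prop3}. Your explicit index checks (and the observation that $c(R)\ge 1$ makes $\phi$ local) are exactly the details the paper leaves implicit in "the assumption on $t$," and they are all correct.
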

\begin{proof}  For part (a), let $(S, \frak n)$ be the local ring $(R,\fm)$ viewed as an $R$-algebra via $\phi$ and let $d=\dim R$.   Viewing $\ph M$ as an $S$-complex and applying Lemma \ref{le:prop1}(a), we obtain that $\Tor_{t+d-v}^R(k,L)=0$. Using Proposition \ref{le:prop2}(a) and the assumption on $t$, we have $\Tor_i^R(k,L)=0$ for all $i\ge t+d-v$.  Hence, $\fd_RM< t+d-v$ by Proposition \ref{le:prop3}.  
Parts (b) and (c) are proved similarly.
\end{proof}

\begin{theorem}
\label{main-theorem1} Let $\phi$ be a contracting endomorphism of a $d$-dimensional local ring $(R,\fm, k)$ such that $\phi(\fm)\subseteq \fm^{c(R)}$.  Let $L$ be an $R$-complex such that $\hh(L)$ is nonzero, bounded, and either $\hh(L)$ is finitely generated or $\Supp_R \hh(L) = \{\fm\}$,  Let $M$ be an $R$-complex such that $v:=\inf \hh(M)$ is finite and $\hh_v(M)\otimes_R R/\fm\neq 0$.
Suppose one of the following conditions hold:
\begin{enumerate}[(a)]
\item $\id_R L<\infty$ and $\Tor_i^R(\ph M, L)=0$ for $d+1$ consecutive values of $i\ge \inf \hh(M) + \sup \hh(L)$.
\item $\fd_R L<\infty$ and $\Ext^i_R(\ph M, L)=0$ for $d+1$ consecutive values of $i\ge \inf \hh(M)-\inf \hh(L)$.
\item $\id_R L<\infty$ and $\Ext^i_R(L, \ph M)=0$ for $d+1$ consecutive values of $i\ge d+\sup \hh(L) - \inf \hh(M)$.
\end{enumerate}
Then $R$ is Gorenstein.
\end{theorem}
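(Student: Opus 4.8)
The plan is to reduce all three cases to Proposition~\ref{le:prop4}(c)$\Rightarrow$(a): it suffices to exhibit a single $R$-complex $L$ with $k\lotimes_R L\not\simeq 0$, $\fd_R L<\infty$, and $\id_R L<\infty$, and then conclude $R$ is Gorenstein. In each of (a), (b), (c) one of the two finiteness conditions is assumed outright, so the work splits into two parts: (i) extract the remaining finiteness from the $d+1$ consecutive vanishings, and (ii) verify $k\lotimes_R L\not\simeq 0$.

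For (i): write the $d+1$ consecutive degrees that vanish as $t,t+1,\dots,t+d$. Since each of these degrees is at least the lower bound imposed in the corresponding part of the theorem, in particular the smallest one, $t$, satisfies the hypothesis "$t\ge\cdots$" demanded by the matching part of Proposition~\ref{le:prop5}. Hence in case (a), Proposition~\ref{le:prop5}(a) gives $\fd_R L<\infty$ (to accompany the assumed $\id_R L<\infty$); in case (b), Proposition~\ref{le:prop5}(b) gives $\id_R L<\infty$ (to accompany $\fd_R L<\infty$); and in case (c), Proposition~\ref{le:prop5}(c) gives $\fd_R L<\infty$ (to accompany $\id_R L<\infty$). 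This is precisely where the contracting hypothesis $\phi(\fm)\subseteq\fm^{c(R)}$ and the nondegeneracy $\hh_v(M)\otimes_R R/\fm\neq 0$ get used, through Lemma~\ref{le:prop1} and Proposition~\ref{pr:tor-iso}; that machinery is already in place.

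For (ii): in all three cases we now have $\fd_R L<\infty$, and by hypothesis $\hh(L)$ is nonzero, bounded, and either finitely generated or supported only at $\fm$. Proposition~\ref{le:prop3}(a) then gives $\fd_R L=\sup\{i\mid\Tor_i^R(k,L)\neq 0\}$; since $\hh(L)\neq 0$ forces $L\not\simeq 0$, this supremum is not $-\infty$, so $\Tor_i^R(k,L)\neq 0$ for some $i$, i.e.\ $k\lotimes_R L\not\simeq 0$. (Alternatively one may argue from $\id_R L<\infty$ via Proposition~\ref{le:prop3}(b) together with Remark~\ref{supp}.) Since $\fd_R L<\infty$ and $\hh(L)$ is bounded, $L$ is homologically bounded on both sides, so feeding $L$ into Proposition~\ref{le:prop4} is legitimate and yields that $R$ is Gorenstein.

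I do not expect a genuine obstacle here: the substantive content is already packaged into Proposition~\ref{le:prop5} and the Section~2 results. The one point that needs care is the index bookkeeping — aligning the $d+1$ consecutive vanishing degrees with the windows $t\le i\le t+d$ of Proposition~\ref{le:prop5}, and checking that the required lower bounds on $t$ follow from the lower bounds on the vanishing degrees assumed in the hypotheses — but this is routine.
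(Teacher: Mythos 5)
Your proposal is correct and follows essentially the same route as the paper: apply Proposition~\ref{le:prop5} with $t$ the smallest of the $d+1$ consecutive vanishing degrees to obtain the missing finiteness of $\fd_R L$ or $\id_R L$, check $k\lotimes_R L\not\simeq 0$ from the hypotheses on $\hh(L)$, and conclude via Proposition~\ref{le:prop4}. The only cosmetic difference is that you verify $k\lotimes_R L\not\simeq 0$ through the formula in Proposition~\ref{le:prop3}(a), whereas the paper invokes Remark~\ref{supp} directly; both are fine.
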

\begin{proof}  For part (a), note that the hypotheses on $L$ imply $k\lotimes_R L\neq 0$ (see Remark \ref{supp}). 
The consecutive $\Tor$ vanishings imply that $L$ has finite flat dimension by  part (a) of Proposition \ref{le:prop5}.   By Proposition \ref{le:prop4}, we obtain that $R$ is Gorenstein.  Parts (b) and (c) are argued similarly.  \end{proof}

For a local ring $R$ let $E_R$ denote the injective envelope of its residue field.   If $R$ is Gorenstein then $\pd_R E_R=\fd_R E_R=\id _R R=\dim R$.  (The first equality is by \cite[Second partie, Th\'eor\`em 3.2.6]{RG}.) Hence, Theorem \ref{main-theorem1} yields the following characterization of Gorenstein rings:

\begin{corollary} \label{cor-main}
Let $(R, \fm, k)$ be a $d$-dimensional local ring and $\phi:R\to R$ a contracting endomorphism such that $\phi(\fm)\subseteq \fm^{c(R)}$.   Let $M$ be an $R$-module such that $M\neq \fm M$. The following conditions are equivalent:
\begin{enumerate}[(a)]
\item $R$ is Gorenstein.
\item $\Tor_i^R(\ph M, E_R)=0$ for $d+1$ consecutive (equivalently, every) $i>d$
\item $\Ext^i_R(\ph M, R)=0$ for $d+1$ consecutive (equivalently, every) $i>d$.
\item $\Ext^i_R(E_R, \ph M)=0$ for $d+1$ consecutive (equivalently, every) $i>d$.
\end{enumerate}
\end{corollary}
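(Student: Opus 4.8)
The plan is to obtain every implication from Theorem~\ref{main-theorem1} together with the homological facts about $E_R$ and $R$ over a Gorenstein ring recalled in the paragraph preceding the statement, namely that $R$ Gorenstein of dimension $d$ forces $\fd_R E_R=\pd_R E_R=\id_R R=d$.

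\emph{The implications $(a)\Rightarrow(b),(c),(d)$.} Assuming $R$ is Gorenstein, the equality $\fd_R E_R=d$ gives $\Tor_i^R(N,E_R)=0$ for every $R$-module $N$ and every $i>d$; taking $N=\ph M$ yields (b) in its strong ``every $i>d$'' form, which a fortiori gives $d+1$ consecutive vanishings. Likewise $\id_R R=d$ forces $\Ext_R^i(\ph M,R)=0$ for all $i>d$, giving (c), and $\pd_R E_R=d$ forces $\Ext_R^i(E_R,\ph M)=0$ for all $i>d$, giving (d).

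\emph{The converses.} Since $M\neq\fm M$ we have $M\neq 0$, so $v:=\inf\hh(M)=\sup\hh(M)=0$ and $\hh_0(M)\otimes_R R/\fm=M/\fm M\neq 0$; thus $M$ meets the standing hypothesis on the module variable in Theorem~\ref{main-theorem1}. For $(b)\Rightarrow(a)$ I would apply Theorem~\ref{main-theorem1}(a) with $L=E_R$: here $\id_R E_R=0<\infty$, $\hh(L)=E_R$ is nonzero and bounded with $\Supp_R E_R=\{\fm\}$, and the threshold $\inf\hh(M)+\sup\hh(L)$ equals $0$, which every index $i>d$ clears; so $R$ is Gorenstein. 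For $(c)\Rightarrow(a)$ apply Theorem~\ref{main-theorem1}(b) with $L=R$ (so $\fd_R R=0<\infty$, $\hh(R)=R$ is nonzero, bounded and finitely generated, and the threshold $\inf\hh(M)-\inf\hh(L)$ is $0$). For $(d)\Rightarrow(a)$ apply Theorem~\ref{main-theorem1}(c) with $L=E_R$ again; now the threshold $d+\sup\hh(L)-\inf\hh(M)$ equals $d$, and the $d+1$ consecutive indices $i>d$ all satisfy $i\ge d$, so the theorem gives that $R$ is Gorenstein. Finally, in each of (b), (c), (d) the ``every $i>d$'' version trivially implies the ``$d+1$ consecutive'' version, while the latter implies (a), which in turn gives the former; hence the two are equivalent, justifying the parentheticals.

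I do not anticipate a real obstacle here: the substance is carried entirely by Theorem~\ref{main-theorem1} and, through it, Proposition~\ref{le:prop4}. The only points needing care are citing the correct homological dimensions of $E_R$ and $R$ over a Gorenstein ring, and the bookkeeping that confirms the indices $i>d$ in (b)--(d) clear the degree thresholds in the hypotheses of Theorem~\ref{main-theorem1}; this works out precisely because $M$ is concentrated in homological degree $0$ and $L$ is taken to be either $R$ or $E_R$, both of which also satisfy $\inf\hh(L)=\sup\hh(L)=0$.
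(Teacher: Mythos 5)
Your proposal is correct and follows exactly the paper's route: the paper derives the corollary from Theorem~\ref{main-theorem1} by taking $L=E_R$ (for (b) and (d)) and $L=R$ (for (c)), using that $R$ Gorenstein gives $\pd_R E_R=\fd_R E_R=\id_R R=d$ for the forward implications. Your index bookkeeping and verification of the hypotheses on $M$ and $L$ match what the paper leaves implicit, so there is nothing to add.
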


We now specialize our results to the case where $R$ has prime characteristic $p$ and the contracting endomorphism is the Frobenius map.  Let $f:R\to R$ be the Frobenius endomorphism and $M$ an $R$-module.  For an integer $e\ge 1$ we let $\ee M$ denote the $R$-module $^{f^e}\!M$.   

First we prove partial converses to Proposition \ref{le:prop5} for the Frobenius map.  We remark that part (a) in the case $M=R$ and $L$ a finitely generated $R$-module is a classic result of Peskine-Szpiro \cite[Th\'eor\`em 1.7]{PS}.  This was later generalized to the case $L$ is an arbitrary module in \cite[Theorem 1.1]{MW}, and to the case $L$ is an arbitrary complex in  \cite[Theorem 1.1]{DIM}.  Part (b) was proved in \cite[Th\'eor\`em 4.15]{PS} in the case $M=R$ and $L$ a finitely generated module, and in \cite[Corollary 3.5]{MW} in the case $M=R$ and $L$ an arbitrary module.

\begin{proposition}
\label{le:prop6}
Let $(R,\fm,k)$ be a Noetherian ring of prime characteristic and let $L$ and $M$ be  $R$-complexes.
\begin{enumerate}[(a)] 
\item If $\fd_R L<\infty$ then $\Tor_i^R(\ee M,L)=0$ for all $i>\fd_R M+\sup \hh(L)$ and $e>0$.
\item Suppose $R$ is $F$-finite, $\hh(M)$ is bounded below and degreewise finitely generated, and $\hh(L)$ is bounded.  If $\id_R L<\infty$ then $\Ext^i_R(\ee M, L)=0$ for all $i>\fd_R M-\inf \hh(L)$ and $e>0$.
\end{enumerate}
\end{proposition}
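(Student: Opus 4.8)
The plan is to reduce both parts to the case $M=R$, which is exactly what the cited results supply (\cite[Theorem 1.1]{DIM} for (a) and \cite[Corollary 3.5]{MW} for (b)), by a change-of-rings argument. Regard the $e$-th iterate of the Frobenius as a ring homomorphism $f^e\colon R\to S$ with $S=R$; then every $R$-complex is tautologically an $S$-complex, and $\ee N$ is the restriction of scalars along $f^e$ of $N$ viewed as an $S$-complex. In each part the conclusion is vacuous unless $\fd_R M<\infty$, which I assume henceforth; note $\fd_S M=\fd_R M$ since the module structures agree ($S=R$).

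For part (a), associativity of the derived tensor product gives a natural isomorphism
\[
\ee M\lotimes_R L\;\simeq\;M\lotimes_S\bigl(S\lotimes_R L\bigr),
\]
hence $\Tor^R_i(\ee M,L)\cong\Tor^S_i(M,W)$ where $W:=S\lotimes_R L$. Since $\hh_i(W)=\Tor^R_i(\ee R,L)$, the case $M=R$ of the proposition — that is, \cite[Theorem 1.1]{DIM} — yields $\hh_i(W)=0$ for $i>\sup\hh(L)$, so $\sup\hh(W)\le\sup\hh(L)$. On the other hand, tensoring a flat resolution of $M$ of length $\fd_S M$ against the soft truncation $\tau_{\le\sup\hh(W)}W$ gives the elementary bound $\Tor^S_i(M,W)=0$ for $i>\fd_S M+\sup\hh(W)$; combining the two inequalities proves (a). This part should be essentially routine, since \cite{DIM} already handles arbitrary complexes $L$.

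For part (b) I would instead use the coinduction adjunction in the derived category,
\[
\RHom_R(\ee M,L)\;\simeq\;\RHom_S(M,V),\qquad V:=\RHom_R(\ee R,L),
\]
so that $\Ext^i_R(\ee M,L)\cong\Ext^i_S(M,V)$. The key is to bound $\inf\hh(V)$ from below: since $\hh^i(V)=\Ext^i_R(\ee R,L)$, the case $M=R$ of (b) gives $\hh^i(V)=0$ for $i>-\inf\hh(L)$, i.e.\ $\inf\hh(V)\ge\inf\hh(L)$. Because $\hh(M)$ is bounded below and degreewise finitely generated and $\fd_R M<\infty$, we have $\pd_S M=\pd_R M=\fd_R M$; the elementary bound obtained from a length-$\pd_S M$ free resolution of $M$ together with the truncation $\tau_{\ge\inf\hh(V)}V$ then gives $\Ext^i_S(M,V)=0$ for $i>\pd_S M-\inf\hh(V)$, and with $\inf\hh(V)\ge\inf\hh(L)$ this yields (b).

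The main obstacle will be in (b), and has two facets. First, \cite[Corollary 3.5]{MW} is stated only for $L$ a module, so the $M=R$ case must be extended to complexes $L$ with bounded homology; this should follow by d\'evissage — induction on the number of nonzero $\hh_i(L)$ using the good-truncation triangles, together with the fact that finite injective dimension is inherited along them (which one verifies via Propositions \ref{le:prop2} and \ref{le:prop3}) — but the index bookkeeping here is delicate. Second, one must pin down where the $F$-finiteness of $R$ is actually used: it is needed precisely to make $\ee R$ a finitely generated $R$-module, and it is indispensable, since when $R$ is regular but not $F$-finite $\ee R$ is flat but not free, so $\Ext^i_R(\ee R,k)$ need not vanish for $i>0$ and already the instance $M=R$, $L=k$ of (b) fails.
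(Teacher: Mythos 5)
Your part (a) is correct, and it takes a genuinely different route from the paper: you reduce to the known case $M=R$ (i.e.\ \cite[Theorem 1.1]{DIM}) via the derived base--change isomorphism $\ee M\lotimes_R L\simeq M\lotimes_S(\ee S\lotimes_R L)$ and the standard bound $\sup\hh(M\lotimes_S W)\le \fd_S M+\sup\hh(W)$, whereas the paper reruns the DIM argument directly for general $M$: it reduces to a local ring with $\fm$ associated to the top nonvanishing Tor and uses depth identities ($\depth(\ee M\lotimes_R L)=-r$ by \cite[2.7]{FI}, $\depth(\ee M\lotimes_R L)=\depth(M\lotimes_R L)$ by \cite[Theorem 2.1]{I}, then the bound from \cite[1.5, 2.7]{FI}). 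Your reduction is shorter and makes clear that the whole content is the $M=R$ case; the paper's argument buys independence from quoting DIM as a black box for complexes.

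Part (b), however, has a genuine gap. Your plan hinges on extending the $M=R$ case from modules $L$ (\cite[Corollary 3.5]{MW}) to complexes with bounded homology by d\'evissage along good-truncation triangles, ``together with the fact that finite injective dimension is inherited along them.'' That fact is false: over $R=\mathbb{F}_p[x]/(x^2)$ the complex $L=(0\to R\xrightarrow{\,x\,}R\to 0)$ is a bounded complex of injectives, hence $\id_R L<\infty$, yet both homology modules and both soft truncations are copies of $k$, which has infinite injective dimension since $R$ is not regular; no appeal to Propositions \ref{le:prop2} and \ref{le:prop3} can repair this, because the long exact sequences allow the nonvanishing Ext of the pieces to cancel. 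So the d\'evissage cannot feed the module case to the pieces, and the key input $\Ext^i_R(\ee R,L)=0$ for $i>-\inf\hh(L)$ is left unproved. The paper avoids this entirely: using $F$-finiteness it observes $\hh(\ee M)$ is degreewise finitely generated and bounded (once $\fd_R M<\infty$), localizes via \cite[Lemma 4.4F]{AF1991}, and then Matlis-dualizes via \cite[Lemma 4.4I]{AF1991}, $\Ext^i_R(\ee M,L)^{\vee}\cong\Tor_i^R(\ee M,L^{\vee})$ with $\fd_R L^{\vee}=\id_R L$ and $\sup\hh(L^{\vee})=-\inf\hh(L)$, reducing (b) to (a) for general $M$ with no truncation argument; you could complete your adjunction approach the same way by proving the $M=R$ complex case through this duality rather than by d\'evissage. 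Separately, your closing claim that $F$-finiteness is indispensable because $\Ext^i_R(\ee R,k)\neq 0$ for $i>0$ when $\ee R$ is flat but not free is not justified: a flat non-free module can have vanishing $\Ext^{>0}$ against the residue field (compute $\Ext^1_{\mathbb{Z}_{(p)}}(\mathbb{Q},\mathbb{Z}/p)=0$ from the telescope resolution), so this side remark should be dropped or argued differently.
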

\begin{proof}  The proof of part (a) largely follows that of \cite[Theorem 1.1: $(1)\Rightarrow (2)$]{DIM}.    Let $e>0$ be an integer and assume $\fd_R L<\infty$, in which case we have $s:=\sup \hh(L)<\infty$.  If $\fd_R M=\infty$ there is nothing to prove, so we assume $t:=\fd_RM <\infty$.  Hence, $\sup \hh(M)=\sup \hh(\ee M)<\infty$; consequently, by \cite[1.5]{FI},  $r:= \sup \hh(\ee M\lotimes_R L)\le \sup \hh(\ee M)+\fd_R L<\infty$. We need to show $r\le t+s$. Again, if $r=-\infty$, there is nothing to prove, so we assume $r$ is finite.  Let $\fp\in \Ass_R \Tor_r(\ee M, L)$.  Since $\fd_{R_{\fp}}M_{\fp}\le t$ and $\sup\hh(L_{\fp})\le s$ , it suffices to prove $r\le t+s$ in the case $(R,\fm,k)$ is local and $\fm\in \Ass_R \Tor_r(\ee M, L)$.   By \cite[2.7]{FI}, we have $\depth \ee M\lotimes_R  L=-r$.   Since $\depth_R M=\depth_R \ee M$, we have by \cite[Theorem 2.1]{I} that $-r=\depth \ee M\lotimes_R L=\depth M\lotimes_R L$.   By \cite[2.7 and 1.5]{FI}, $\depth M\lotimes_R L\ge -\sup \hh(M\lotimes_R L)\ge  -s - t$.  This gives the desired result.

For part (b), as $\ee R$ is a finitely generated $R$-module, we have  that $\hh(\ee M)$ is degreewise finitely generated. If $\fd_RM=\infty$ or $L\simeq 0$ in $\sfD(R)$ there is nothing to prove. Hence, we may assume $\fd_RM<\infty$ and $\inf \hh(L)<\infty$.  Hence  $\hh(M)$, and thus $\hh (\ee M)$, is bounded.  Consequently, $\ee M$ is isomorphic in $\sfD (R)$ to a bounded below complex of finitely generated projective $R$-modules.  By \cite[Lemma 4.4F]{AF1991}, $\Ext^i_R(\ee M, L)_\fm \cong \Ext^i_R(\ee M_{\fm}, L_{\fm})$ for all $i$ and every maximal ideal $\fm$.  Hence, without loss of generality, we may assume $(R,\fm)$ is a local ring.   By \cite[Lemma 4.4I]{AF1991},
$\Ext^i_R(\ee M, L)^{\vee}\cong \Tor_i^R(\ee M, L^{\vee})$ for all $i$, where $(-)^{\vee}:=\Hom_R(-, E_R)$.  Furthermore, $\fd_RL^{\vee}=\id_R L<\infty$ and
$\sup \hh(L^{\vee})=-\inf \hh(L)$.   The result now follows from part (a).
\end{proof}

Before stating the next corollary, we set some terminology and notation.  Again, we let $(R, \fm, k)$ be a local ring.  Recall (e.g., \cite{AF1991}) that an $R$-complex $D$ is {\it dualizing} if $\hh(D)$ is bounded and the homothety morphism $R\to \RHom_R(D,D)$ is an isomorphism in $\sfD(R)$; such a complex exists if and only if $R$ is the quotient of a Gorenstein ring (\cite[Theorem 1.2]{K}).   We say that a dualizing complex $D$ is {\it normalized} if $\sup \hh (D)=\dim R$.  A normalized dualizing complex for $R$, if it exists, is unique up to isomorphism in $\sfD(R)$ and will be denoted by $D_R$.   When $R$ is Cohen-Macaulay and a quotient of a Gorenstein ring, we let $\omega_R$ denote the canonical module of $R$, in which case $D_R \simeq \susp^d \omega_R$ in $\sfD(R)$.  If $\mathbf x$ is a system of parameters for $R$, we let $C(\mathbf x)$ denote the \v Cech complex of $R$ on $\mathbf x$, with the convention that $\inf \hh (C(\mathbf x))=-\dim R$.  

We now highlight several (of many possible) consequences of Theorem \ref{main-theorem1} and Proposition \ref{le:prop6}:

\begin{corollary}
\label{cor1}
Let $(R,\fm,k)$ be a $d$-dimensional local ring of prime characteristic $p$, $\mathbf x$ a system of parameters for $R$,  and $e\ge \log_p c(R)$ an integer.   The following conditions are equivalent:
\begin{enumerate}[(a)]
\item $R$ is Gorenstein.
\item $\Tor_i^R(\ee R, E_R)=0$ for $d+1$ consecutive (equivalently, every) $i>0$.
\item $R$ is a quotient of a Gorenstein ring and $\Tor^R_i(\ee R, D_R)=0$ for $d+1$ consecutive (equivalently, every) $i> \dim R$.
\item $R$ is Cohen-Macaulay, a quotient of a Gorenstein ring,  and $\Tor^R_i(\ee R, \omega_R)=0$ for $d+1$ consecutive (equivalently, every) $i>0$.
\end{enumerate}
\noindent
If in addition $R$ is $F$-finite, the above statements are equivalent to:
\begin{enumerate}[(e)]
\item[(e)] $\Ext^i_R(\ee R, R)=0$ for $d+1$ consecutive (equivalently, every)  $i>0$.
\item[(f)] $\Ext^i_R(\ee R, C(\mathbf x))=0$ for $d+1$ consecutive (equivalently, every) $i>\dim R$.
\end{enumerate}
\end{corollary}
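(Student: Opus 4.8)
The plan is to derive all the equivalences by routing through (a): Theorem~\ref{main-theorem1} will give the implications (b)--(f) $\Rightarrow$ (a), and Proposition~\ref{le:prop6} the implications (a) $\Rightarrow$ (b)--(f). In every application I would take $M=R$ and let the contracting endomorphism be $\phi:=f^e$, so that $\ph M=\ee R$. The observation that makes this legitimate is that $e\ge\log_p c(R)$ means $p^e\ge c(R)$, whence $f^e(\fm)=\fm^{[p^e]}\subseteq\fm^{p^e}\subseteq\fm^{c(R)}$; thus $f^e$ meets the standing hypothesis $\phi(\fm)\subseteq\fm^{c(R)}$, while $M=R$ trivially satisfies $v:=\inf\hh(R)=0$, $\hh_0(R)\otimes_R k=k\neq0$, and $\fd_R R=0$.

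Next, for each of (b)--(f) I would name the complex $L$ and record the data the two results need: boundedness, nonvanishing, and finite generation (or $\fm$-support) of $\hh(L)$; the value of $\sup\hh(L)$ or $\inf\hh(L)$; and finiteness of $\id_R L$ or $\fd_R L$. For (b), $L=E_R$ has $\id_R E_R=0$, $\hh(E_R)$ nonzero and concentrated in degree $0$ with $\Supp_R E_R=\{\fm\}$, and $\fd_R E_R=\dim R<\infty$ when $R$ is Gorenstein (by the remark preceding Corollary~\ref{cor-main}). For (c), $L=D_R$ exists because (c) assumes $R$ is a quotient of a Gorenstein ring; it has $\id_R D_R<\infty$, $\hh(D_R)$ bounded, nonzero, and finitely generated with $\sup\hh(D_R)=d$, and $D_R\simeq\susp^d R$ (so $\fd_R D_R<\infty$) when $R$ is Gorenstein. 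For (d), $L=\omega_R$ has $\id_R\omega_R=\dim R<\infty$, $\hh(\omega_R)$ a nonzero finitely generated module in degree $0$, and $\omega_R\simeq R$ (so $\fd_R\omega_R=0$) when $R$ is Gorenstein. For (e), $L=R$ has $\fd_R R=0$, $\inf\hh(R)=0$, and $\id_R R=\dim R<\infty$ when $R$ is Gorenstein. For (f), $L=C(\mathbf x)$ is a bounded complex of localizations of $R$, so $\fd_R C(\mathbf x)<\infty$ unconditionally; $\hh(C(\mathbf x))$ is bounded with $\Supp_R\hh(C(\mathbf x))=\{\fm\}$ and is nonzero, since $\hh_{-d}(C(\mathbf x))=H^d_{\fm}(R)\neq0$, and $\inf\hh(C(\mathbf x))=-d$; moreover $\id_R C(\mathbf x)<\infty$ when $R$ is Gorenstein, by Proposition~\ref{le:prop4}(b). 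With $M=R$, the threshold $\inf\hh(M)+\sup\hh(L)$ equals $0,d,0$ in cases (b),(c),(d), and $\inf\hh(M)-\inf\hh(L)$ equals $0,d$ in cases (e),(f), which is exactly why the statement asks for vanishing in degrees $i>0$ or $i>d$.

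With this bookkeeping done the implications are mechanical: (b) $\Rightarrow$ (a), (c) $\Rightarrow$ (a), (d) $\Rightarrow$ (a) apply Theorem~\ref{main-theorem1}(a) with the corresponding $L$ and $M=R$, and (e) $\Rightarrow$ (a), (f) $\Rightarrow$ (a) apply Theorem~\ref{main-theorem1}(b); I would note that these last two in fact require no $F$-finiteness. For the converses, a Gorenstein ring is Cohen-Macaulay and a quotient of a Gorenstein ring, so the extra hypotheses of (c) and (d) hold automatically, and the required vanishing of $\Tor_i^R(\ee R,L)$, respectively $\Ext^i_R(\ee R,L)$, throughout the indicated range is Proposition~\ref{le:prop6}(a), respectively Proposition~\ref{le:prop6}(b) (using $F$-finiteness and that $\hh(R)$ is degreewise finitely generated), applied with $\fd_R R=0$. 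The parenthetical equivalence is then automatic: vanishing for every $i$ in the relevant range trivially forces vanishing at $d+1$ consecutive values, and vanishing at $d+1$ consecutive values forces (a), hence vanishing for all such $i$.

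The work here is organizational rather than deep; the only substantive inputs are the standard facts recalled above: a dualizing complex has finite injective dimension and collapses to $\susp^d R$ over a Gorenstein ring, $\id_R\omega_R=\dim R$, and $C(\mathbf x)$ is a finite complex of flat modules whose homology in degree $-i$ is $H^i_{\fm}(R)$. The step most likely to cause slips is the alignment of index ranges: one must check in each case that the degrees in which Proposition~\ref{le:prop6} produces vanishing coincide with those for which Theorem~\ref{main-theorem1} produces the Gorenstein conclusion. The only genuine use of $F$-finiteness is in the implications (a) $\Rightarrow$ (e) and (a) $\Rightarrow$ (f), through Proposition~\ref{le:prop6}(b).
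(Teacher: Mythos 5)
Your proposal is correct and follows essentially the same route as the paper: the paper likewise notes $f^e(\fm)=\fm^{[p^e]}\subseteq\fm^{c(R)}$, that over a Gorenstein ring $E_R\simeq\susp^{d}C(\mathbf x)$ has finite flat dimension and $R\cong\omega_R\simeq\susp^{-d}D_R$ has finite injective dimension, and then invokes Theorem~\ref{main-theorem1} and Proposition~\ref{le:prop6} exactly as you do. Your bookkeeping of the degree thresholds and the remark that $F$-finiteness is only needed for (a)$\Rightarrow$(e),(f) are accurate elaborations of the same argument.
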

\begin{proof}  Note that if $R$ is Gorenstein, $E_R\cong H^d_m(R)\simeq \susp^{d} C(\mathbf x)$ has finite flat dimension, and $R\cong \omega_R\simeq \susp^{-d} D_R$ has finite injective dimension.  Also, $f^e(\fm)=\fm^{[p^e]}\subseteq \fm^{c(R)}$. The equivalences now follow from Theorem \ref{main-theorem1} and Proposition \ref{le:prop6}.
\end{proof}

We prove one final criterion for Gorensteinness of a somewhat different flavor. 

 Let  $(R,\fm, k)$ be a local ring and $\phi:R\to R$ be a contracting endomorphism.  Endow $\ph R$ with an $R-R$ bimodule structure given by $r\cdot s:=\phi(r)s$ and $s\cdot r:=sr$ for all $r\in R$ and $s\in  \ph R$.    Then $F^{\phi}(-):= -\otimes_R \ph R$  is an additive right exact endofunctor on the category of right $R$-modules.    For a finitely generated $R$-module $M$, we let $\mu_R(M)$ denote the minimal number of generators of $M$.
 
 \begin{lemma} \label{le2}  Let $R$ and $\phi$ be as above and $M$ a finitely generated $R$-module.   Then:
 \begin{enumerate}[(a)]
 \item $F^{\phi}(R^n)\cong R^n$ for every $n\ge 1$.
 \item $\mu_R(F^{\phi}(M))=\mu_R(M)$.
 \item  $\Supp_R F^{\phi}(M)=\{\fp\in \Spec R \mid \phi^{-1}(\fp)\in \Supp_R M\}$.
  \item If $F^{\phi}(M)\cong M$ then $M$ is free.
 \end{enumerate}
 \end{lemma}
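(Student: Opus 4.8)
The plan is to deduce (a) and (b) by applying the right exact functor $F^{\phi}$ to a minimal free presentation of $M$, to get (c) from a fibrewise computation that tracks the change of rings along $\phi$, and to obtain (d) by iterating $F^{\phi}$ and invoking the fact that a finitely generated module which is free modulo every power of $\fm$ is free. For (a): $F^{\phi}(R)=R\otimes_R\ph R$ is isomorphic, as a right $R$-module, to $\ph R$ with its ordinary right $R$-action, via $r\otimes s\mapsto \phi(r)s$; hence $F^{\phi}(R)\cong R$, and additivity of $F^{\phi}$ gives $F^{\phi}(R^n)\cong R^n$. Under this identification $F^{\phi}$ sends the ``multiplication by $a$'' endomorphism of $R$ to multiplication by $\phi(a)$; thus, for a homomorphism $\alpha\colon R^a\to R^b$ with matrix $A$, the map $F^{\phi}(\alpha)$ has matrix $\phi(A)$ (apply $\phi$ to each entry). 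For (b): choose a minimal free presentation $R^a\xrightarrow{A}R^b\to M\to 0$, so the entries of $A$ lie in $\fm$. A contracting endomorphism is local: if $\phi(x)$ were a unit for some $x\in\fm$ then so would $\phi^i(x)$, contradicting $\phi^i(\fm)\subseteq\fm^2$. Hence $\phi(A)$ again has entries in $\fm$, and by (a) and right exactness we obtain an exact sequence $R^a\xrightarrow{\phi(A)}R^b\to F^{\phi}(M)\to 0$ in which $R^b\to F^{\phi}(M)$ is a projective cover; therefore $\mu_R(F^{\phi}(M))=b=\mu_R(M)$.

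For (c): by (b) both $M$ and $F^{\phi}(M)$ are finitely generated, so $\fp\in\Supp_R F^{\phi}(M)$ if and only if $F^{\phi}(M)\otimes_R\kappa(\fp)\neq 0$, where $\kappa(\fp)$ denotes the residue field at $\fp$. Writing $F^{\phi}(M)\otimes_R\kappa(\fp)=M\otimes_R\bigl(\ph R\otimes_R\kappa(\fp)\bigr)$, I would identify $\ph R\otimes_R\kappa(\fp)$ with $\kappa(\fp)$ regarded as an $R$-module through the composite $R\xrightarrow{\phi}R\twoheadrightarrow\kappa(\fp)$; this map has kernel $\fq:=\phi^{-1}(\fp)$ and so factors as $R\to\kappa(\fq)\hookrightarrow\kappa(\fp)$. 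Consequently $F^{\phi}(M)\otimes_R\kappa(\fp)\cong(M\otimes_R\kappa(\fq))\otimes_{\kappa(\fq)}\kappa(\fp)$, and since the field extension $\kappa(\fq)\hookrightarrow\kappa(\fp)$ is faithfully flat, this is nonzero precisely when $M\otimes_R\kappa(\fq)\neq 0$, i.e.\ when $\phi^{-1}(\fp)\in\Supp_R M$.

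For (d): starting from a minimal presentation $R^a\xrightarrow{A}R^b\to M\to 0$ (so $b=\mu_R(M)$ and the entries of $A$ lie in $\fm$), part (a) and right exactness give $F^{\phi}(M)\cong\operatorname{coker}(\phi(A))$; iterating this, and using that $F^{\phi}(M)\cong M$, one obtains an isomorphism $M\cong\operatorname{coker}(\phi^{n}(A))$ for every $n\ge 1$. Because $\phi$ is contracting there is an $i$ with $\phi^{i}(\fm)\subseteq\fm^{2}$, and then $\phi^{ki}(\fm)\subseteq\fm^{2^{k}}$ for every $k$; so given any $N$ we may choose $n$ with every entry of $\phi^{n}(A)$ lying in $\fm^{N}$. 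Then $\phi^{n}(A)R^a\subseteq\fm^{N}R^b$, whence $M/\fm^{N}M\cong R^b/(\phi^{n}(A)R^a+\fm^{N}R^b)=(R/\fm^{N})^{b}$ is a free $R/\fm^{N}$-module of rank $b$. Finally, writing $0\to K\to R^b\to M\to 0$ for the minimal presentation and reducing modulo $\fm^{N}$, one gets a surjection of free $R/\fm^{N}$-modules of equal rank, hence an isomorphism, so $K\subseteq\fm^{N}R^b$ for all $N$; the Krull Intersection Theorem then forces $K=0$, i.e.\ $M$ is free. The steps I expect to require the most care are the two identifications above --- that $F^{\phi}$ carries a presentation matrix $A$ to its entrywise image $\phi(A)$ (this is what lets the iteration in (d) push the relations into arbitrarily high powers of $\fm$), and the change-of-rings computation of $\ph R\otimes_R\kappa(\fp)$ needed for (c) --- but both should be routine once the bimodule structure on $\ph R$ is kept straight.
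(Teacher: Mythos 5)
Your proof is correct, and for parts (a)--(c) it follows essentially the same route as the paper: additivity of $F^{\phi}$ together with $\ph R\cong R$ as right $R$-modules for (a), applying $F^{\phi}$ to a minimal free presentation (noting that a contracting $\phi$ is local, so the presentation stays minimal) for (b), and a Nakayama-type change-of-rings reduction at $\fq=\phi^{-1}(\fp)$ for (c) --- your residue-field computation $F^{\phi}(M)\otimes_R\kappa(\fp)\cong (M\otimes_R\kappa(\fq))\otimes_{\kappa(\fq)}\kappa(\fp)$ is just a more explicit version of the paper's localization argument. The only real divergence is in (d): the paper dismisses it as well known, citing Rahmati's Lemma 2.1(a), whereas you prove it from scratch by observing that $F^{\phi}$ sends a presentation matrix $A$ to $\phi(A)$ entrywise, iterating to get $M\cong\operatorname{coker}\phi^{n}(A)$ with entries in arbitrarily high powers of $\fm$, and then concluding $K=0$ via a length/surjectivity argument modulo $\fm^{N}$ and the Krull Intersection Theorem. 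This self-contained argument is essentially the standard proof of the cited lemma, so nothing is lost; it buys independence from the reference at the cost of a page of routine verification, and all the delicate points (the matrix rule for $F^{\phi}$, $\phi^{ki}(\fm)\subseteq\fm^{2^{k}}$, and the finite-length surjection being an isomorphism) check out.
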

 \begin{proof}
 Part (a) is clear, as $F^{\phi}$ is additive and  $\ph R\cong R$ as right $R$-modules.  Part (b) follows by noting that $F^{\phi}$ applied to a minimal presentation of $M$ yields a minimal presentation of $F^{\phi}(M)$.   For part (c), by localizing $R$ at $\phi^{-1}(\fp)$, it suffices to show that for any (local) homomorphism of local rings $R\to S$, $M\neq 0$ if and only if $M\otimes_R S\neq 0$.  This follows readily from Nakayama's lemma.  Part (d) is well-known (e.g., \cite[Lemma 2.1(a)]{Ra}).
 \end{proof}
 
 The following result generalizes \cite[Theorem 4.2.8]{W}, where it is proved in the case $R$ is a complete, one-dimensional $F$-pure ring of prime characteristic and $\phi$ is the Frobenius map:

\begin{proposition}
\label{le:prop7}
Let $(R,\fm,k)$ be a Cohen-Macaulay local ring possessing a canonical module $\omega_R$.  Let $\phi:R\to R$ be a contracting homomorphism.   The following are equivalent:
\begin{enumerate}[(a)]
\item $R$ is Gorenstein.
\item $\operatorname{id}_R F^{\phi}(\omega_R)<\infty$.
\end{enumerate}
\end{proposition}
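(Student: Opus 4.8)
The plan is to exploit the fact that $F^\phi(\omega_R) \cong \ph(\omega_R)$ as $R$-modules, so that condition (b) says exactly that $\ph(\omega_R)$ has finite injective dimension over $R$. First I would recall that $\omega_R$ is a maximal Cohen--Macaulay $R$-module with $\omega_R \neq \fm\omega_R$, hence $k \lotimes_R \omega_R \not\simeq 0$, and moreover $\id_R \omega_R < \infty$ if and only if $R$ is Gorenstein (indeed $\id_R \omega_R = \dim R$ when $R$ is Gorenstein, and conversely finite injective dimension of a nonzero finitely generated module forces $R$ Cohen--Macaulay with $\omega_R$ free). So the implication (a)$\Rightarrow$(b) is immediate: if $R$ is Gorenstein then $\omega_R \cong R$, hence $F^\phi(\omega_R) \cong F^\phi(R) \cong R$ by Lemma~\ref{le2}(a), which has finite injective dimension.

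For (b)$\Rightarrow$(a), the idea is to combine the finite-injective-dimension hypothesis on $\ph(\omega_R)$ with the fact that $\omega_R$ itself has \emph{finite injective dimension after restriction along $\phi$}, and play this off against Proposition~\ref{le:prop4}. The cleanest route is: since $\id_R F^\phi(\omega_R) = \id_R \ph(\omega_R) < \infty$, Proposition~\ref{le:prop3}(b) gives $\Ext^i_R(k, \ph(\omega_R)) = 0$ for all $i \gg 0$, hence by Lemma~\ref{le:prop1}(b) (or directly Proposition~\ref{le:prop5}(b), applied with $M = \omega_R$, $L = \omega_R$, viewing the first copy as restricted along $\phi$)\,---\,wait, that bounds $\id_R$ of the \emph{second} argument. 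So instead I would argue as follows. Replacing $\phi$ by a suitable power $\phi^i$ with $\phi^i(\fm) \subseteq \fm^{c(R)}$ (which only strengthens the contracting condition and does not affect finiteness of injective dimension, since $\ph M$ finite id implies $\ph[\phi^2] M$ finite id, etc.), we are in the setting where Proposition~\ref{pr:tor-iso}(b) applies. Then $\Ext^i_R(\ph(\omega_R), L) $ can be computed in terms of $\Ext^*_R(k, L)$; taking $L = \omega_R$, the finiteness of $\id_R \ph(\omega_R)$ together with a Koszul-complex argument as in Lemma~\ref{le:prop1} shows $\Ext^i_R(k, \omega_R) = 0$ for $i \gg 0$, whence $\id_R \omega_R < \infty$ by Proposition~\ref{le:prop3}(b), and then $R$ is Gorenstein.

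Actually the most transparent argument goes through Proposition~\ref{le:prop4}(c) directly. Note $\omega_R$ has finite injective dimension after restriction along $\phi$ by hypothesis, and I claim it also has finite \emph{flat} dimension after a further restriction, or rather: one shows $\fd_R \ph(\omega_R) < \infty \Leftrightarrow R$ regular (by Theorem~\ref{ahiy}-type reasoning), which is too strong. The right statement to extract is that $L := \ph(\omega_R)$ is a bounded complex with $k \lotimes_R L \not\simeq 0$ (this uses $\omega_R \neq \fm\omega_R$ and Lemma~\ref{le2}(b), so $F^\phi(\omega_R) \neq \fm F^\phi(\omega_R)$), and with $\id_R L < \infty$; if we can also produce an $R$-complex with finite flat \emph{and} finite injective dimension whose derived tensor with $k$ is nonzero, Proposition~\ref{le:prop4}(c) finishes. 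I would get this by noting that when $\id_R \ph(\omega_R) < \infty$, a descent argument along the contracting endomorphism (exactly as in Proposition~\ref{le:prop5}, transferring Ext-vanishing over $S = \ph R$ back to $R$) forces $\id_R \omega_R < \infty$, and since $\omega_R$ trivially satisfies $k \lotimes_R \omega_R \not\simeq 0$ and, being maximal Cohen--Macaulay, one then concludes $R$ is Gorenstein by the standard fact that a Cohen--Macaulay local ring with a canonical module of finite injective dimension is Gorenstein. The main obstacle is the bookkeeping in this descent step: one must correctly identify the parameter system $\bsy$ for $S$, verify $\phi(\fm) \subseteq \fn^{c(S)}$ after passing to a power (noting $c(S) = c(R)$ since $S = R$ as a ring), and convert the hypothesis ``$\id_R(\text{restricted }\omega_R) < \infty$'' into vanishing of $\Ext^i_R(\kos{\bsy}{\omega_R}, \omega_R)$ for a long enough stretch of $i$, so that Proposition~\ref{pr:tor-iso}(b) yields $\Ext^i_R(k, \omega_R) = 0$ and hence $\id_R \omega_R < \infty$.
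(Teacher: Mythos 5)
Your proposal breaks down at its very first step: $F^{\phi}(\omega_R)=\omega_R\otimes_R\ph R$ is \emph{extension} of scalars along $\phi$ (base change to $S=\ph R$ viewed as an $R$-algebra), not restriction of scalars, so it is not isomorphic to $\ph\omega_R$ in general; compare $F^{\phi}(R)\cong R$ (Lemma \ref{le2}(a)) with $\ph R$, which for the Frobenius on a non-regular ring is a very different $R$-module. A quick sanity check: if hypothesis (b) really said $\id_R(\ph\omega_R)<\infty$, then by the Avramov--Hochster--Iyengar--Yao theorem quoted in Theorem \ref{ahiy} the ring $R$ would already be \emph{regular}, and the proposition would be a trivial corollary with a stronger conclusion --- a sign that the reading is wrong. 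Because of this misidentification, the plan of feeding (b) into the machinery of Sections 2--3 (Proposition \ref{pr:tor-iso}, Lemma \ref{le:prop1}, Propositions \ref{le:prop5} and \ref{le:prop4}) does not get off the ground, and even on its own terms it has gaps: those results require $\phi(\fm)\subseteq\fm^{c(R)}$, which is not assumed here (your replacement of $\phi$ by a power rests on the unproved claim that finiteness of the relevant injective dimension persists under iterating $\phi$), and, more importantly, they convert vanishing of $\Ext^i_R(\ph M,L)$ or $\Tor^R_i(\ph M,L)$ over a stretch of $d+1$ degrees into a conclusion about the auxiliary complex $L$; hypothesis (b) only gives $\Ext^i_R(k,F^{\phi}(\omega_R))=0$ for $i\gg0$, which is not of that shape, and your sketch never produces the needed vanishing of $\Ext^i_R(\kos{\bsy}{\omega_R},\omega_R)$. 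The ``descent step'' ($\id$ of the transported module finite $\Rightarrow\id_R\omega_R<\infty$) is exactly the missing content, and nothing in the cited results supplies it.

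The paper's proof is of a different nature and uses the module-theoretic, Cohen--Macaulay structure rather than the Loewy-length machinery (note the proposition imposes no condition $\phi(\fm)\subseteq\fm^{c(R)}$). Since $F^{\phi}(\omega_R)$ is a finitely generated module of finite injective dimension, Sharp's theorem yields an exact sequence $0\to K\to\omega_R^{\,n}\to F^{\phi}(\omega_R)\to0$ with $K\subseteq\fm\omega_R^{\,n}$; Lemma \ref{le2}(b) and Nakayama force $n=1$; localizing at a prime in $\Ass_R K\subseteq\Min R$ and using Lemma \ref{le2}(c) together with the structure of finite-injective-dimension modules over an Artinian local ring forces $K=0$; hence $F^{\phi}(\omega_R)\cong\omega_R$, and Lemma \ref{le2}(d) gives that $\omega_R$ is free, i.e., $R$ is Gorenstein. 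The Example and the open Question following the proposition indicate why a purely homological transfer argument of the kind you envision is not expected to work: finiteness of injective dimension of the base-changed dualizing complex does not detect Gorensteinness in general, so the proof genuinely needs Sharp's theorem and the rigidity statement $F^{\phi}(M)\cong M\Rightarrow M$ free.
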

\begin{proof}
The implication $(a)\Rightarrow (b)$ is clear.  Suppose $(b)$ holds.   Then there exists an exact sequence
$$0\to K\to \omega_R^n\to F^{\phi}(\omega_R)\to 0$$
for some $n\ge 1$ and where $K\subseteq \fm\omega_R^n$ (cf. \cite[Theorem 2.1 and Corollary 2.3]{Sh}).    By Lemma \ref{le2}(b) and Nakayama's lemma,  $\mu_R(\omega_R)=\mu_R(F^{\phi}(\omega_R))=\mu_R(\omega_R^n)$ and hence $n=1$.   We claim $K=0$.  If not, let $\frak p\in \Ass_R K$.  Then $\frak p \in \Ass_R \omega_R=\Min_R R$.   By Lemma \ref{le2}(c), $\Supp_R F^{\phi}(\omega_R)=\Supp_R \omega_R=\Spec R$.   Hence, $F^{\phi}(\omega_R)_{\frak p}\neq 0$ and $\id_{R_{\frak p}} F^{\phi}(\omega_R)_{\frak p}<\infty$. Since $\dim R_{\frak p}=0$,  $F^{\phi}(\omega_R)_{\frak p}\cong \omega_{R_{\frak p}}^{\ell}$ for some $\ell\ge 1$. As $F^{\phi}(\omega_R)_{\frak p}$ is a homomorphic image of $(\omega_R)_{\frak p}\cong \omega_{R_{\frak p}}$, we see that $\ell=1$.  Hence, $K_{\frak p}=0$, a contradiction.   Thus, $K=0$ and $F^{\phi}(\omega_R)\cong \omega_R$.
By part (d) of Lemma \ref{le2}, we obtain that $\omega_R$ is a free $R$-module.  Hence, $R$ is Gorenstein.
\end{proof}

\begin{example} Let $R$ be a quasi-Gorenstein local ring of prime characteristic $p$ and which possesses a dualizing complex $D_R$.  Let $f:R\to R$ be the Frobenius endomorphism.  Since $F^{f}(I)\cong I$ for any injective $R$-module (e.g., \cite[Proposition 3.6]{M}), we see that $F^{f}(D_R)$ is a bounded complex of injective $R$-modules.  Hence, $\id_R F^{f}(D_R)<\infty$.  However, $R$ need not be Gorenstein (e.g., see \cite[Theorem 2.11]{A}).
\end{example}

We leave as a question whether a derived analogue of Proposition \ref{le:prop7} holds:

\begin{question}  Let $(R,\fm.k)$ be a  local ring possessing a dualizing complex $D_R$, and $\phi:R\to R$ a contracting endomorphism.   Suppose $\id_R  D_R\lotimes_R \ph R<\infty$.  Must $R$ be Gorenstein?
\end{question}

\end{document}